\DeclareMathAlphabet{\mathpzc}{OT1}{pzc}{m}{it}
\newcommand{\nc}{\newcommand}
\nc{\dmo}{\DeclareMathOperator}
\dmo{\ra}{\rightarrow}
\dmo{\N}{\mathbb{N}}
\dmo{\Z}{\mathbb{Z}}
\dmo{\Q}{\mathbb{Q}}
\dmo{\R}{\mathbb{R}}
\dmo{\C}{\mathcal{C}}
\dmo{\AC}{\mathcal{AC}}
\dmo{\Mod}{Mod}
\dmo{\Sp}{Sp}
\dmo{\PMod}{PMod}
\dmo{\B}{B}
\dmo{\PB}{PB}
\dmo{\PR}{PSL(2,\mathbb{R})}
\dmo{\I}{\mathcal{I}}
\dmo{\el}{\ell_{\C}}
\dmo{\NN}{\mathcal{N}}
\dmo{\rk}{rank}
\dmo{\w}{\omega}
\dmo{\F}{\mathcal{F}}
\dmo{\Out}{\mathrm{Out}}
\dmo{\Hom}{\mathrm{Hom}}
\dmo{\sign}{sign}
\newcommand{\T}{\mathcal{T}}
\newcommand{\D}{\mathcal{D}}
\renewcommand{\H}{\mathcal{H}}
\tikzset{->-/.style={decoration={
  markings,
  mark=at position #1 with {\arrow{>}}},postaction={decorate}}}
\nc{\nt}{\newtheorem}
\newtheorem{thm}{{\bf Theorem}}[section]
\newtheorem{lem}[thm]{{\bf Lemma}}
\newtheorem{cor}[thm]{{\bf Corollary}}
\newtheorem{prop}[thm]{{\bf Proposition}}
\newtheorem{remark}[thm]{Remark}
\newtheorem{definition}[thm]{Definition}
\numberwithin{equation}{section}
\title[Asymptotic translation length on the sphere complexes]{On the asymptotic translation lengths on the sphere complexes and the generalized fibered cone}
\date{\today}
\author{Hyungryul Baik}
\address{
		Department of Mathematical Sciences, KAIST\\
		291 Daehak-ro Yuseong-gu, Daejeon, 34141, South Korea 
}
\email{%
        hrbaik@kaist.ac.kr
}
\author{Dongryul M. Kim}
\address{%
	Department of Mathematics, Yale University\\
	219 Prospect Street, New Haven, CT 06511, USA
}
\email{%
	dongryul.kim@yale.edu
}
\author{Chenxi Wu}
\address{%
		Department of Mathematics, University of Wisconsin--Madison\\
		480 Lincoln Drive, Madison, WI 53706, USA
}
\email{%
        cwu367@math.wisc.edu
}
\begin{document}
\begin{abstract}

  In this paper, we study the asymptotic translation lengths on the sphere complexes of monodromies of a manifold fibered over the circle. Given a compact mapping torus, we define a cone in the first cohomology which we call the generalized fibered cone, and show that every primitive integral element gives a fibration over the circle. Moreover, we prove that the generalized fibered cone is a rational slice of Fried's cone, which is defined as the dual of homological directions, an analogue of Thurston's fibered cone.

  As a consequence of our description of the generalized fibered cone, we provide each proper subcone of the generalized fibered cone with a uniform upper bound for asymptotic translation lengths of monodromies on sphere complexes of fibers in the proper subcone. Our upper bound is purely in terms of the dimension of the proper subcone.
  We also deduce similar estimates for asymptotic translation lengths of some mapping classes on finite graphs constructed in the works of Dowdall--Kapovich--Leininger, measured on associated free-splitting complexes and free-factor complexes.

  Moreover, as an application of our result, we prove that the asymptote for the minimal asymptotic translation length of the genus $g$ handlebody group on the disk complex is $1/g^2$, the same as the one on the curve complex.
\end{abstract}

\maketitle

\tableofcontents



\section{Introduction} \label{sec:intro}

Group actions have been proven to be fruitful in the study of groups. For instance, Thurston \cite{thurston1988geometry} and Bers \cite{bers1978extremal} classified mapping classes of a closed surface according to the dynamics of their action on the Teichm\"uller space. Moreover, Masur--Minsky \cite{MasurMinsky99, masur2000geometry} studied the action of the mapping class group on the curve complex, and then proved the relative hyperbolicity of the mapping class group.

In \cite{baik2018upper}, Baik--Shin--Wu studied fibered 3-manifold groups and related dynamics on the curve complexes. Namely, for a fibered hyperbolic 3-manifold, they showed an estimation for the asymptotic translation lengths of monodromies in a fibered cone, where the asymptotic translation length is measured on the curve complex of each fiber.

In this paper, we extend this result from fibered 3-manifold groups to more general fibered manifold groups. Throughout the paper, we consider smooth connected compact manifolds and simply call them manifolds. For a manifold $M$ and a diffeomorphism $\varphi : M \to M$, we consider the mapping torus $N$ of $\varphi$. This gives a fibration $N \to S^1$ and a flow $\F$ on $N$.
As an analogy of Thurston's fibered cone, we introduce a cone in the first cohomology $H^1(N)$ associated to the monodromy $\varphi : M \to M$, which we call
the \emph{generalized fibered cone} (Definition \ref{def:generalizedfibercone}). We show that each primitive integral element in the generalized fibered cone gives a fibration over the circle with respect to the flow $\F$ (Proposition \ref{prop:gen_fibration}).

Moreover, we prove the characterization of the generalized fibered cone as a rational slice of Fried's cone.
In \cite{fried1982geometry}, for a flow in the manifold, Fried introduced the set of so-called homological directions as a subset of the first homology of the manifold. In the first cohomology, the dual cone of the set of homological directions is called \emph{Fried's cone}, and Fried showed that every primitive integral element corresponds to a fibration over the circle with respect to the given flow.
We also show that the generalized fibered cone is the intersection of a rational subspace and Fried's cone in $H^1(N)$ for the flow $\F$ on $N$ (Theorem \ref{thm.genfibfried_intro}).

In our main theorem, we estimate the asymptotic translation lengths of monodromies coming from the generalized fibered cone, on sphere complexes of fibers (Theorem \ref{theorem:mainthm}).
As an application, we also obtain the precise asymptote of minimal asymptotic translation lengths of pseudo-Anosov handlebody mapping classes on disk graphs in terms of genera (Theorem \ref{theorem.mainhandle}).

\subsection{Generalized fibered cone and asymptotic translation lengths}
For a compact manifold $M$ and a diffeomorphism $\varphi : M \to M$, we consider the mapping torus $N$ of $\varphi$ and define the generalized fibered cone in $H^1(N)$ associated to the monodromy $\varphi : M \to M$ (Definition \ref{def:generalizedfibercone}). We begin with the following characterization of the generalized fibered cone:

\begin{theorem}[Theorem \ref{thm.genfibfried}] \label{thm.genfibfried_intro}
	Let $\varphi : M \to M$ be a diffeomorphism of a compact manifold to itself. Let $N$ be the mapping torus of $\varphi$. Then the generalized fibered cone in $H^1(N)$ associated to $\varphi$ is the intersection of a rational subspace and Fried's cone for the suspension flow given by $\varphi$ in $H^1(N)$.
\end{theorem}

Hence, one can simply regard the generalized fibered cone in the following discussion as a rational slice of Fried's cone. We are mainly interested in the asymptotic translation lengths of monodromies on sphere complexes of fibers. We define the sphere complex as follows, which is a generalization of the curve complex of a surface. A sphere in a compact manifold is called essential if it does not bound a ball or is not isotopic to a boundary component.

\begin{definition}[Sphere complex]
	For a compact manifold $M$ and $n \ge 1$, its \emph{sphere complex} $\mathcal{S}(M; n)$ is a simplicial complex whose vertices are isotopy classes of essential embedded spheres $S^n \subseteq M$, and $k+1$ number of isotopy classes $S_0, \ldots, S_k$ of spheres form a $k$-simplex in $\mathcal{S}(M;n)$ if and only if they can be represented by $k+1$ number of disjoint spheres. 
	
	Each $k$-simplex is identified with the standard simplex in $\R^{k+1}$ spanned by $(1/\sqrt{2})\vec{e}_1, \ldots, (1 / \sqrt{2}) \vec{e}_{k+1}$ where $\vec{e}_i$'s are standard unit vectors. Then we endow the sphere complex $\mathcal{S}(M; n)$ with the induced path metric $d_{\mathcal{S}(M; n)}$.
\end{definition}

We note that for a closed surface $S$, the sphere complex $\mathcal{S}(S;1)$ is the usual \emph{curve complex} of $S$. The 1-skeleton of the curve complex is called the \emph{curve graph} which is quasi-isometrically embedded in the curve complex.

\begin{remark}
	Throughout the paper, most of our argument on the sphere complex $\mathcal{S}(M;n)$ does not depend on the exact value of $n$ while interesting cases are with low codimensions. Hence, when we deal with the sphere complex, we simply use the notation $\mathcal{S}(M)$ to mean by the sphere complex $\mathcal{S}(M;n)$ for some fixed $n$. 
\end{remark}

Sphere complexes have played an important role in geometric group theory and algebraic topology. For instance, the connectivity of various sphere complexes have been obtained and used to show the homological stability of automorphism groups of free groups in \cite{hatcher1995homological}. 

Similar to the curve complex of a surface, a diffeomorphism $M \to M$ naturally induces the isometry $\mathcal{S}(M) \to \mathcal{S}(M)$. Furthermore, two isotopic diffeomorphisms on $M$ induce the same isometry on $\mathcal{S}(M)$. In this regard, we come up with a question pertaining to generalizing the dynamical properties of mapping class groups on curve complexes to the sphere complexes. In particular, we consider asymptotic translation lengths on  sphere complexes.

\begin{definition}[Asymptotic translation length] \label{def:asymptotictl}
	
	Let $(\mathcal{Y}, d)$ be a metric space and $f : \mathcal{Y} \to \mathcal{Y}$ be an isometry. Then its \emph{asymptotic translation length $l_{\mathcal{Y}}(f)$ on $\mathcal{Y}$} is defined as $$l_{\mathcal{Y}}(f) = \liminf_{n \to \infty} {d(f^n(y), y) \over n}$$ for $y \in \mathcal{Y}$.
	
\end{definition}

Throughout the paper, we write $0 \le A(x) \lesssim B(x)$ if there is a constant $C>0$ satisfying $A(x) \le CB(x)$ for all $x$. In addition, when we have both $A(x) \lesssim B(x)$ and $B(x) \lesssim A(x)$, we write $A(x) \asymp B(x)$. Also, since the first cohomology with $\R$-coefficients is a finite-dimensional real vector space, we take any norm $\lVert \cdot \rVert$ without specifying it.
The following is our main theorem. 

\begin{restatable}{theorem}{mainthma} \label{theorem:mainthm}
	Let $\varphi : M\to M$ be a diffeomorphism of a compact manifold to itself. Consider the generalized fibered cone of the mapping torus of $\varphi$, and let $\mathcal{R}$ be an intersection of a $d+1$-dimensional rational subspace 
	and a proper subcone of the generalized fibered cone. Then 
	 we have \begin{equation} \label{eqn.mainest}
	l({\varphi_{\alpha}})\lesssim \lVert \alpha \rVert^{-1-1/d}
	 \end{equation} for all primitive integral element $\alpha \in \mathcal{R}$ where $\varphi_{\alpha}$ is the corresponding monodromy and $l(\varphi_{\alpha})$ is the asymptotic translation length of $\varphi_{\alpha}$ on the sphere complex of the fiber.
\end{restatable}

\begin{remark}
	We simply denote by $l(\cdot)$ the asymptotic translation length on the sphere complex of the underlying manifold since it is of our primary interest. Similarly, in Section \ref{sec:estimate}, we denote $l_{\mathcal{FS}_{\cdot}}(\cdot)$, $l_{\mathcal{FF}_{\cdot}}(\cdot)$, and $l_{\mathcal{D}_{\cdot}}(\cdot)$ for the asymptotic translation lengths of the induced isometries on the free-splitting complex, the free-factor complex, and the disk graph respectively.
	
\end{remark}

We deduce similar statements for asymptotic translation lengths on free-splitting and free-factor complexes of free groups as well. In \cite{dowdall2015dynamics}, Dowdall--Kapovich--Leininger proved that given an expanding irreducible train track map $\psi:G \to G$ which is a homotopy equivalence on a graph $G$, there is an open rational cone, called the {\em positive cone}, in the first cohomology of a (folded) mapping torus of $\psi$ containing the monodromy class. The positive cone is a free-by-cyclic group version of Thurston's fibered cone in the sense that for any primitive integral cohomology class $\alpha$ in the positive cone, $\alpha$ gives an another expanding irreducible train track map $\psi_{\alpha}: G_\alpha\rightarrow G_\alpha$. We note that $\psi_{\alpha}$ may not be uniquely determined by $\alpha$; it requires more choices to be made to obtain $\psi_{\alpha}$ (see \cite[Theorem B]{dowdall2015dynamics} for the precise statement).

As in \cite{aramayona2011automorphisms}, the sphere complex of a 3-manifold with free fundamental group is related to the free-splitting complex, 
(which is also the simplicial completion of the Culler--Vogtmann Outer space \cite{vogtmann2018topology}). Moreover, the barycentric subdivision of the sphere complex with a marked point has something to do with the free-factor complex, as studied by Hatcher--Vogtmann \cite{hatcher1998complex}.
The relation between the free-splitting complex and the free-factor complex was also studied by
Kapovich--Rafi  \cite{kapovich2014hyperbolicity}.
Observing these relations among the sphere complex, the free-splitting complex, and the free-factor complex, 
Theorem \ref{theorem:mainthm} has some implications on the dynamics on free-splitting complexes and  free-factor complexes. 
We deduce analogous estimates on the asymptotic translation lengths on free-splitting complexes and free-factor complexes  from Theorem \ref{theorem:mainthm} (Corollary \ref{cor:FSlength} and Corollary \ref{cor:FFlength}). Together with (\cite{hironaka2011fibered}, \cite{HenselKielak_handlebody}, \cite{laudenbach1974topologie}), these can be interpreted in terms of $\Out(F_g)$-actions of free-splitting and free-factor complexes of the free group $F_g$ (Remark \ref{rmk.out}).

\subsection{Minimal asymptotic translation lengths on disk graphs}

Another application of our results is related to minimal asymptotic translation lengths of subgroups of mapping class groups of surfaces. Let $S_g$ be a closed connected orientable surface of genus $g \ge 2$ and denote its mapping class group by $\Mod(S_g)$.

\begin{definition}[Minimal asymptotic translation length]
	For a subgroup $H \le \Mod(S_g)$ and a metric space $\mathcal{Y}$ on which $H$ isometrically acts, the \emph{minimal asymptotic translation length of $H$ on $\mathcal{Y}$} is $$L_{\mathcal{Y}}(H) := \inf  \{ l_{\mathcal{Y}}(f) : f \in H \text{ is pseudo-Anosov}\}.$$
\end{definition}

Minimal asymptotic translation lengths of some subgroups of mapping class groups have been studied in the settings of Teichm\"uller spaces (e.g. \cite{agol2016pseudo}, \cite{farb2008lower}, \cite{hironaka2011fibered}, \cite{penner1991bounds}) and curve complexes (e.g.  \cite{baik2020minimal}, \cite{baik2018upper}, \cite{gadre2011minimal}, \cite{kin2019small}). On Teichm\"uller space $\mathcal{T}_g$ and curve complex $\C_g$ of $S_g$, the following asymptotes are known for the whole mapping class group $\Mod(S_g)$ and the Torelli group $\I_g$:
\begin{table}[h]
	\begin{center}
	\begin{tabular}{ |c|c|c| } 
	   \hline
	   Subgroups & Teichm\"uller spaces & Curve complexes\\ 
	   \hline
	   $\Mod(S_g)$ &  $\begin{matrix} \\ \mbox{(Penner \cite{penner1991bounds})} \\ L_{\T_g}(\Mod(S_g)) \asymp 1/g  \\ \ \end{matrix}$ & $\begin{matrix} \mbox{(Gadre-Tsai \cite{gadre2011minimal})} \\ L_{\C_g}(\Mod(S_g)) \asymp 1/g^2 \end{matrix}$ \\ 
	   \hline
	   $\I_g$  & $\begin{matrix} \\ \mbox{(Farb-Leininger-Margalit \cite{farb2008lower})} \\ L_{\T_g}(\I_g) \asymp 1 \\ \ \end{matrix}$ & $\begin{matrix} \mbox{(Baik-Shin \cite{baik2020minimal})} \\ L_{\C_g}(\I_g) \asymp 1/g \end{matrix}$ \\ 
	   \hline
	  \end{tabular}
   \end{center}
   \end{table}

   We now consider the \emph{handlebody group} $\H_g < \Mod(S_g)$. That is, identifying $S_g$ with the boundary $\partial V_g$ of genus $g$ handlebody $V_g$, the handlebody group $\H_g$ consists of mapping classes of $S_g$ that extends to $V_g$. Kin--Shin proved in \cite{kin2019small} the following asymptote:
   \begin{equation} \label{eqn.ksasymp}
	L_{\mathcal{C}_g}(\mathcal{H}_g) \asymp {1 \over g^2}.
   \end{equation}

   On the other hand, there is a subcomplex of $\C_g$ on which the handlebody group $\H_g$ acts, the disk graph, which is defined analogous to the curve graph:\footnote{One can indeed consider the disk complex, but for simplicity, we consider the disk graph which is the 1-skeleton of the disk complex.}

   \begin{definition}[Disk graph]
	   \emph{Disk graph} $\mathcal{D}_g$ of the handlebody $V_g$ is a graph whose vertices are isotopy classes of embedded disks $(D^2, \partial D^2) \subseteq (V_g, \partial V_g)$ such that $\partial D^2$ is essential and two vertices are adjacent if they are represented by two disjoint disks. We endow the disk graph with a metric so that each edge is of length 1.
   \end{definition}

   It is clear from the definition that $\H_g$ acts on $\D_g$ by isometries. 
   Masur--Schleimer showed that the disk graph $\D_g$ is Gromov hyperbolic \cite{MasurSchleimer_disk}. The inclusion $(D^2, \partial D^2) \subset (V_g, \partial V_g)$ induces an embedding $\D_g \to \C_g$, and the image of $\D_g$ under this embedding is quasi-convex in $\C_g$ as shown by Masur--Minsky  \cite{MasurMinsky04}. However, the disk graph $\D_g$ is distorted in the curve complex $\C_g$. Indeed, it is not quasi-isometrically embedded \cite{MasurSchleimer_disk}. Hence it is not straightforward whether  handlebody groups $\H_g$ have the same asymptote for the minimal asymptotic translation lengths on disk graphs $\D_g$ and on curve complexes $\C_g$. As an application of Theorem \ref{theorem:mainthm}, we answer the affirmative:

   \begin{restatable}{theorem}{mainthmhandle} \label{theorem.mainhandle}
	We have $$L_{\D_g}(\H_g) \asymp \frac{1}{g^2}.$$

   \end{restatable}

   Note that the lower bound of the above asymptote can be deduced from \eqref{eqn.ksasymp}.

\subsection{Future directions} 
In \cite{culler1986moduli}, Culler--Vogtmann introduced the notion of the Outer space $CV_g$ which is equipped with a natural action of $\Out(F_g)$. Roughly speaking, $CV_g$ is the space of marked metric graph structures on $F_g$ of volume 1. It has a natural simplicial decomposition in terms of graphs and the vertices that can be re-interpreted as splittings of $F_g$ as a free product or HNN extension via Bass--Serre theory \cite{vogtmann2018topology}. This allows us to identify the free-splitting complex $\mathcal{FS}_g$ with the simplicial closure of $CV_g$ as we noted earlier. For a general review on the geometry of Outer space, one can refer to \cite{vogtmann2015geometry}.

According to \cite{handel2019free}, a fully irreducible element in $\Out(F_g)$ acts as a hyperbolic isometry on $\mathcal{FS}_g$ which is equivalent to the sphere complex. Hence, we can say more if we could figure out a lower bound of translation lengths in Theorem \ref{theorem:mainthm} or Corollary \ref{cor:FSlength}. To be precise, let $\phi \in \Out(F_g)$ be fully irreducible and let $L$ be its quasi-axis on $\mathcal{FS}_g$. Then $L$ and a geodesic connecting $x \in L$ and $\phi(x)$ pass through coarsely. Noting that the Outer space can also be defined via sphere systems in a doubled handlebody as introduced in \cite{hatcher1995homological}, the lower bound for translation lengths on $\mathcal{FS}_g$ gives a lower bound for the number of foldings one needs to get $\phi(x)$ from $x$ as points in $CV_g$, by comparing the barycentric subdivision and the dual complex of $\mathcal{FS}_g$.
Moreover, it would be possible to make the lower bounds uniform on a positive cone in \cite{dowdall2015dynamics} if we could further control various sphere complexes and monodromies from the positive cone. Indeed, if $\phi \in \Out(F_g)$ further satisfies that $F_g \rtimes_{\phi}\Z$ is word-hyperbolic, monodromies from the positive cone for $\phi$ are fully irreducible by \cite{dowdall2015dynamics}.

Another important remark is that the deductions of Theorem \ref{thm:positivecone} from Theorem \ref{theorem:mainthm} are based on the concrete geometric description of the generalized fibered cone for doubled handlebody case. However, for general manifolds other than surfaces and doubled handlebodies, we do not have a concrete description of the generalized fibered cone. Finding a concrete geometric description of the generalized fibered cone in more general setting would lead to many other applications of our approach. 

Lastly, Baik--Kin--Shin--Wu conjectured in \cite{baik2019asymptotic} that if an element of the mapping class group of a surface has small asymptotic translation length on the curve complex, then the element is a normal generator of the mapping class group. When one considers the action on the Teichm\"uller space instead of the curve complex, such a phenomenon was obtained by Lanier--Margalit \cite[Theorem 1.2]{lanier2018normal}. One might view our present article as a beginning step toward an analogous question for $\Out(F_g)$, or more generally, automorphism groups of sphere complexes.

\subsection*{Organization}

We define the generalized fibered cone and characterize it as a rational slice of Fried's cone in Section \ref{sec:generalizedfiberedcone}, proving Theorem \ref{thm.genfibfried_intro}. In Section \ref{sec:mainA}, we prove Theorem~\ref{theorem:mainthm}. Section \ref{sec.handlebody} is devoted to the application to minimal asymptotic translation lengths of handlebody groups on disk graphs (Theorem \ref{theorem.mainhandle}). We provide a construction of a diffeomorphism between doubled handlebodies from a given folding sequence for a combinatorial map between graphs in 
Section \ref{sec:3mnfd}. In Section \ref{sec:estimate}, we discuss applications to free-splitting complexes and free-factor complexes of free groups.

\subsection*{Acknowledgements}

We greatly appreciate for Sebastian Hensel, Autumn Kent, Daniel Levitin, Karen Vogtmann, and Nathalie Wahl for many helpful discussions and comments. We give our special thanks to Camille Horbez for reading the first version of the draft and suggesting the proof of Corollary \ref{cor:FFlength}. Finally we thank the anonymous referee for careful reading and many valuable comments.

The first author was partially supported by the National Research Foundation of Korea(NRF) grant funded by the Korea government(MSIT) (No. 2020R1C1C1A01006912)


\section{Generalized fibered cone and slices of Fried's cone} \label{sec:generalizedfiberedcone}

Recall that a fibered cone for the surface case is a cone in the first cohomology of a fibered hyperbolic 3-manifold such that every primitive integral cohomology class in the cone corresponds to a fibration over the circle. In this section, we define a higher-dimensional analogue of the fibered cone, the \emph{generalized fibered cone}. 

Let $M$ be a compact manifold and $\varphi: M \to M$ be a diffeomorphism. 
Let $\Z^d \cong H \le H_1(M)$ be a free abelian subgroup invariant under $\varphi$ with the connected free abelian cover $\tilde{M}$ having Deck group  $H$. We fix a lift $\tilde{\varphi} : \tilde{M} \to \tilde{M}$ of $\varphi : M \to M$.
 In other words, we have the following commutative diagram: $$\mbox{\color{white} Deck group }\begin{tikzcd}
\tilde{M} \arrow[r, "\tilde{\varphi}"] \arrow[d] & \tilde{M} \arrow[d, "\mbox{Deck group }H"] \\
M \arrow[r, "\varphi"] & M
\end{tikzcd}$$

 Let $N := M \times \R / (x, s) \sim (\varphi(x), s + 1)$ be the mapping torus.
We also set $\tilde{N}' :=\tilde{M}\times\mathbb{R}$ 
and $\tilde{N} :=\tilde{N}'/(x, s)\sim (\tilde{\varphi}(x), s+1)$ which is the mapping torus of $\tilde{\varphi}$. The associated flow $\F := \{\F_t\}_{t \in \R}$ on $N$ is given by the projection of the map $(x, s) \mapsto (x, s + t)$ on $\tilde N'$ at time $t \in \R$. We summarize the relationship among these spaces using the following commutative diagram:
$$\begin{tikzcd}
\tilde{M} \arrow[r, "\tilde{\varphi}"] \arrow[d]& \tilde{M} \arrow[d] \\
M \arrow[r, "\varphi"] & M
\end{tikzcd} \quad \xrightarrow{\mbox{ mapping torus }} \quad \begin{tikzcd}
\tilde{N} \arrow[d] & \tilde{N}' \arrow[l, "\ \Z- \mbox{fold} \ "] \arrow[ld, dashed, "\mbox{Deck group } \Gamma"] \\
N & \end{tikzcd}$$
Then $\tilde{N}'$ is a $\Gamma := H \oplus \Z$ cover of $N$. Here $(h, n)\in \Gamma$ applied to $(x, s)\in \tilde{N}'$ is $(\tilde{\varphi}^nh(x), s+n)$. In other words, $\Gamma = H \oplus \Z$ is a quotient of $H_1(N)$, hence its dual $\Gamma^* = \Hom(\Gamma, \Z)$ is a subgroup of $H^1(N)$. Throughout the paper, we also use the coordinate in $\Gamma^*$ dual to $\Gamma$. Furthermore, 
the $\Gamma$-action is restricted to the action on $\tilde M$ given by $(h, n)\cdot x = \tilde \varphi^n h(x)$ for $x \in \tilde M$. In this point of view, we sometimes identify $(h, n) = \tilde \varphi^n h$ when we discuss the action on $\tilde M$.
By abusing notations, we use multiplication for the group operation on $H$ when we consider elements of $H$ as maps, and use addition when we regard $H$ as a free abelian group.

Now fix a fundamental domain $D$ of the cover $\tilde{M} \to M$. For a map $f : \tilde{M}\to \tilde{M}$, let us define $$\Omega(f) := \mathrm{CH} \left \lbrace h \in H : (h \cdot D) \cap f(D) \neq \emptyset \right \rbrace$$ where $\mathrm{CH}\{ \cdot \}$ denotes a convex hull of $\{\cdot \}$ in $H \otimes \R$. Using this notation, we define $$\Omega := \bigcup_{t\in\Z} \left( - \Omega \left( \tilde{\varphi}^t \right)\times \{t\} \right) \subseteq \Gamma \otimes \R.$$ 
We then have:

\begin{prop} \label{prop:non_empty}
  The set $\Omega$ is contained in a pair of rational cones whose intersection with $H\otimes\mathbb{R}\times \{\pm 1\}$ is bounded.
\end{prop}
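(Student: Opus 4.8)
\emph{Proof proposal.}
The plan is to reduce the proposition to a single linear‑growth estimate for the polytopes $\Omega(\tilde\varphi^{t})$: namely, that there are bounded rational polytopes $P_{+},P_{-}\subseteq H\otimes\R$ with
\[
\Omega(\tilde\varphi^{t})\ \subseteq\ |t|\cdot P_{\sign(t)}\qquad\text{for every }t\neq 0 ,
\]
together with $\Omega(\tilde\varphi^{0})=\{0\}$ (which we arrange by choosing $D$ so that distinct $H$‑translates of $D$ are disjoint; by Remark~\ref{rmk:indepfund} this choice is harmless). Granting this, one takes $C_{+}$ to be the rational polyhedral cone over $(-P_{+})\times\{1\}$ and $C_{-}$ the cone over $(-P_{-})\times\{-1\}$ — concretely, $C_{+}$ is generated by the integer vectors $(-v,1)$ and $C_{-}$ by the $(-v,-1)$, as $v$ runs over the vertices of $P_{+}$, resp.\ $P_{-}$. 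For $t\neq 0$ every point of $-\Omega(\tilde\varphi^{t})\times\{t\}$ has the form $|t|\cdot(w,\sign(t))$ with $w\in -P_{\sign(t)}$, hence lies in $C_{\sign(t)}$; together with the $t=0$ slice this gives $\Omega\subseteq C_{+}\cup C_{-}$, with $C_{+}\subseteq\{t\geq 0\}$ and $C_{-}\subseteq\{t\leq 0\}$. Finally $C_{+}\cap\big(H\otimes\R\times\{1\}\big)=(-P_{+})\times\{1\}$ is bounded and $C_{+}\cap\big(H\otimes\R\times\{-1\}\big)=\emptyset$, and symmetrically for $C_{-}$; this is exactly the assertion.

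To prove the estimate, fix a fundamental domain $D$ with compact closure $\overline D$ and set $S_{\pm}:=\{\,h\in H:(h+\overline D)\cap\tilde\varphi^{\pm 1}(\overline D)\neq\emptyset\,\}$. These are finite, because $\tilde\varphi^{\pm 1}(\overline D)$ is compact and $\{h+\overline D\}_{h\in H}$ is a locally finite family; put $P_{\pm}:=\mathrm{CH}(S_{\pm})$, a polytope with vertices in $H\cong\Z^{d}$. The key input is the commutation relation $\tilde\varphi\circ h=h\circ\tilde\varphi$ for $h\in H$, which is part of the standing setup — it is what makes the $\Gamma=H\oplus\Z$–action on $\tilde{N}'$ above well defined. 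Using it, I would prove by induction on $t\geq 1$ that $\{\,h\in H:(h+\overline D)\cap\tilde\varphi^{t}(\overline D)\neq\emptyset\,\}$ is contained in the $t$‑fold Minkowski sum $S_{+}+\cdots+S_{+}$: if $y\in\overline D$ satisfies $\tilde\varphi^{t+1}(y)\in h+\overline D$, write $\tilde\varphi(y)=g_{0}+z$ with $z\in\overline D$, so that $g_{0}\in S_{+}$ because $\tilde\varphi(y)\in\tilde\varphi(\overline D)\cap(g_{0}+\overline D)$; commutation gives $\tilde\varphi^{t+1}(y)=g_{0}+\tilde\varphi^{t}(z)$, so $h-g_{0}$ lies in the set attached to $\tilde\varphi^{t}$, hence by induction in the $t$‑fold sum, whence $h\in S_{+}+(S_{+}+\cdots+S_{+})$. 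Taking convex hulls yields $\Omega(\tilde\varphi^{t})\subseteq t\,P_{+}$ for $t\geq 1$, and running the identical argument with $\tilde\varphi^{-1}$ in place of $\tilde\varphi$ gives $\Omega(\tilde\varphi^{-t})\subseteq t\,P_{-}$, which is the estimate.

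The one genuine subtlety is that the growth really must be \emph{linear}: if one only had $\tilde\varphi\circ h=A(h)\circ\tilde\varphi$ for a nontrivial automorphism $A$ of $H$, the same induction would instead give $\Omega(\tilde\varphi^{t})\subseteq\sum_{j=0}^{t-1}A^{j}(P_{+})$, which need not grow linearly — it is exponential whenever $A$ has an eigenvalue off the unit circle — and then $\Omega$ lies in no cone with bounded $\{t=\pm1\}$ slice. What saves us is precisely that $\tilde\varphi$ commutes with the deck group $H$, so the Minkowski‑sum bound is exactly what is available; this is the heart of the argument. The remaining points — finiteness of $S_{\pm}$, integrality (hence rationality) of the cones, and the disposal of the single slice $t=0$ — are routine.
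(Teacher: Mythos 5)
Your proposal is correct and takes essentially the same route as the paper: both prove the linear bound $\Omega(\tilde{\varphi}^{\pm t})\subseteq t\cdot(\text{fixed bounded rational polytope})$ by an inductive Minkowski-sum argument that covers the image of the compact fundamental domain by deck translates (using, as the paper does implicitly, that $\tilde{\varphi}$ commutes with the $H$-action), and then takes the pair of cones over the $t=\pm 1$ slices. The only differences are cosmetic: the paper symmetrizes into a single polytope $C=\mathrm{CH}\left(-\Omega(\tilde{\varphi})\cup\Omega(\tilde{\varphi}^{-1})\right)$ rather than keeping $P_{+}$ and $P_{-}$ separate, peels off the last iterate instead of the first, and leaves the $t=0$ slice and the finiteness of the generating set implicit where you spell them out.
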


\begin{proof}
  Let $C$ be the convex hull of $-\Omega(\tilde{\varphi})\cup \Omega(\tilde{\varphi}^{-1})$. Then $C$ is a bounded polygon because $D$ is compact. Then, because for any $n>0$,
  \[\begin{aligned}
  \Omega(\tilde{\varphi}^n)& =CH\{h\in H : (h\cdot D)\cap \tilde{\varphi}^n(D)\not=\emptyset\} \\
  & =CH\left\lbrace h\in H : \begin{matrix}
  \exists h_1 \text{ such that }(h\cdot D)\cap\tilde{\varphi}(h_1\cdot D)\not=\emptyset,\\ (h_1\cdot D)\cap \tilde{\varphi}^{n-1}(D)\not=\emptyset
  \end{matrix}\right\rbrace
  \end{aligned}\]
  We have $\Omega(\tilde{\varphi}^n)\subset (-C)+(-C)+\cdots+(-C)$, where the addition is among $n$ copies of $(-C)$. Here, we use conventions $-A=\{x\in H\otimes\mathbb{R}: -x\in A\}$ and  $A+B=\{a+b: a\in A, b\in B\}$.
  Similarly, $\Omega(\tilde{\varphi}^{-n})$ is contained in the sum of $n$ copies of $C$. As a consequence, $\Omega$ is contained in the set $\{(x, t): x=0, t=0\text{ or }x/t\in C\}$.
\end{proof}

We denote by $\hat \Omega \subset \Gamma \otimes \R$ the asymptotic cone of $\Omega$, i.e., $$\hat \Omega := \{ x \in \Gamma \otimes \R : x = \lim_{i \to \infty} t_i w_i \text{ for some sequences } w_i \in \Omega \text{ and } t_i \to 0\}.$$
Now we define the \emph{generalized fibered cone}:

\begin{definition}[Generalized fibered cone] \label{def:generalizedfibercone}
	In the above setting of a diffeomorphism $\varphi:M \to M$ on a compact manifold $M$ with a choice of a fundamental domain, the \emph{generalized fibered cone} is the dual cone $\C \subset H^1(N)$ of the asymptotic cone of $\Omega$:
	 for the asymptotic cone $\hat \Omega$ of $\Omega$,
	$$\C := \{ \alpha \in \Gamma^* \otimes \R : \sign(t)\alpha(h, t) > 0 \text{ for all non-zero } (h, t) \in \hat \Omega \}$$ where $\sign(t)=1$ when $t>0$ and $\sign(t)=-1$ when $t<0$. In particular, for any $\alpha \in \C$, there exists $K > 0$ such that for any $(h, t) \in \Omega$ with $|t| > K$, we have $\sign(t) \alpha(h, t) > 0$.
      \end{definition}

	 By Proposition \ref{prop:non_empty},  the generalized fibered cone always has non-empty interior. Moreover,  if $\Omega \subset \Gamma \otimes \R$ is of finite Hausdorff distance to the union of two cones in $\Gamma \otimes \R$ centered at the origin, the first one with $t\geq 0$ and second one with $t\leq 0$, then $\mathcal{C}$ is the intersection of the dual cone of the first cone and the negative of the dual cone of the second one.   Although the choice of fundamental domain is involved in defining the generalized fibered cone, it is independent of the choice:

	  \begin{lem}
	  The generalized fibered cone does not depend on the choice of the fundamental domain $D$.
	  \end{lem}

	  \begin{proof}
	  This is due to the fact that the covering $\tilde M \to M$ is abelian. Let $D$ and $D'$ be fundamental domains with compact closures, and fix an $H$-equivariant map $f : \tilde M \to \tilde M$. If $(h \cdot D) \cap f(D) \neq \emptyset$ for $h \in H$, we then have $x_1, x_2 \in D$ such that $h(x_1) = f(x_2)$. Since $D'$ is also a fundamental domain, there exist $h_1, h_2 \in H$ and $x_1', x_2' \in D'$ such that $x_1 = h_1 x'$ and $x_2 = h_2 x_2'$. Hence, we have $(h h_1)(x_1') = (f h_2)(x_2')$. By the equivariance of $f$ and the fact that $H$ is abelian, we have $(h_2^{-1}h_1 h)(x_1') = f(x_2')$. Since both $D$ and $D'$ have compact closures, there are only finitely many possible $h_1$ and $h_2$, depending only on $D$ and $D'$. This implies that the set $\Omega(f)$ obtained using $D$ is within bounded Hausdorff distance from the one obtained using $D'$, and vice versa. It then follows that the generalized fibered cone is independent of the choice between $D$ and $D'$.
	  \end{proof}

\subsection{Generalized fibered cone as the slice of Fried's cone}

Fried showed in \cite{fried1982geometry} that given a flow on a compact manifold, there exists a cone in the first cohomology so that a primitive integral element gives a fibration over the circle with respect to the given flow if and only if it belongs to the cone. Applying this result to the flow $\F$ on $N$, we obtain the cone, $\C_F \subset H^1(N)$, which we call Fried's cone. Our generalized fibered cone is indeed a slice of Fried's cone.

\begin{thm}[Theorem \ref{thm.genfibfried_intro}] \label{thm.genfibfried}
	We have $$\C = \C_F \cap (\Gamma^* \otimes \R).$$
	In other words, a primitive integral element $\alpha \in \Gamma^*$ gives a fibration $N \to S^1$ with respect to the flow $\F$ if and only if $\alpha \in \C$.
\end{thm}

To prove this, we first recall the work of Fried \cite{fried1982geometry}: let $X$ be a compact smooth manifold and $\phi = (\phi_t)_{t \in \R}$ be a $C^1$-flow on $X$ which is either transverse or tangent to each component of $\partial X$. Let $\tilde X$ be a connected $\Z$-cover with Deck transformation $g : \tilde X \to \tilde X$ and a lift $\tilde \phi$ of the flow $\phi$. We denote by $\tilde X/\tilde \phi$ the space of $\tilde \phi$-flow lines in $\tilde X$. The $\Z$-cover $\tilde X$ can be compactified by two points $\{\pm \infty\}$ so that $g^n x \to \pm \infty$ as $n \to \pm \infty$ for any $x \in \tilde X$. Fried gave the following characterization of the cross section of $\phi$ in terms of the behavior of $\tilde \phi$:

\begin{thm} \cite[Theorem A]{fried1982geometry} \label{thm.fried}
In the above setting, the following are equivalent:
\begin{enumerate}
	\item for any $x \in \tilde X$, $\tilde \phi_t(x) \to \pm \infty$ as $t \to \pm \infty$.
	\item  $K := \tilde X / \tilde \phi$ is a cross section of the flow $\phi$ in $X$ so that we can identify $\tilde X = K \times \R$  and have $\tilde \phi_t(k, s) = (k, s + t)$ and $g(r(k), s) = (k, s + t(k))$ where $t(k)$ is the return time of $k \in K$ under $\phi$ and $r(k) = \phi_{t(k)}(k)$. In particular, $X$ is fibered over the circle with fiber $K = \tilde X / \tilde \phi$.
\end{enumerate}
\end{thm}

Now recall that we have a mapping torus $N$ of $M$ with the monodromy $\varphi$. Using the Fried's result, we show the following proposition, which implies that $\C \subset \C_F \cap (\Gamma^* \otimes \R)$. For $\alpha \in \Gamma^*$, we denote by $\alpha^{\perp} < \Gamma$ the subgroup consisting of elements whose pairing with $\alpha$ is $0$.

\begin{prop} \label{prop:gen_fibration}
  Let $\alpha=(\cdot, n_{\alpha})$ be any primitive integral class in $\mathcal{C}$. Then $N$ admits another fibration over the circle respecting the flow $\F$ so that the generator of the first cohomology of the circle pulls back to $\alpha$, and the fiber is
  \[M_\alpha=\tilde{M}/\alpha^{\perp}.\]
Moreover, the monodromy $\varphi_{\alpha}$ has a lift $\tilde \varphi_{\alpha}$ on $\tilde M$ such that $\tilde \varphi_{\alpha}^{n_{\alpha}} = \tilde \varphi$.
\end{prop}
Here, a fibration over the circle respecting the flow $\F$ refers to a fibration such that the suspension flow is conjugate to the flow $\F$. 

\begin{proof}

  To show the desired fibration, recall the commutative diagram of coverings: $$\begin{tikzcd}
	\tilde{N} \arrow[d] & \tilde{N}'=\tilde{M}\times\mathbb{R} \arrow[l, "\ \Z- \mbox{fold} \ "] \arrow[ld, dashed, "\mbox{Deck group } \Gamma = H \oplus \Z"] \\
	N & \end{tikzcd}$$	
	and the flow $\F$ can be lifted to the $\R$-translation flow on $\tilde N' = \tilde M \times \R$. 
	We consider the $\Z$-cover $$\tilde N'/ \alpha^\perp \to N$$
	and denote by $\tilde \F$ the induced flow on $\tilde N' / \alpha^{\perp}$, which is the lift of $\F$ as well. The Deck group is generated by $(g, m) \in H \oplus \Z$ such that $\alpha(g, m) = 1$. Since the set of $\tilde \F$-flow lines in $\tilde N' / \alpha^\perp$ is identified with $\tilde M / \alpha^\perp$, the first statement follows once we verify that Theorem \ref{thm.fried}(1) holds with $X = N$, $\tilde X = \tilde N' / \alpha^{\perp}$, $\phi = \F$, and $\tilde \phi = \tilde \F$.

	We first claim that any $\tilde \F$-flow line in $\tilde N' / \alpha^{\perp}$ does not accumulate. Suppose not. It means that there exist $(x, s) \in \tilde N' = \tilde M \times \R$ and sequences $t_i \to \pm \infty$ and $(h_i, n_i) \in \alpha^{\perp} < H \oplus \Z$ such that the sequence $$(h_i, n_i)\cdot(x, s + t_i) = (h_i \tilde \varphi^{n_i}(x), s + n_i + t_i)$$ converges in $\tilde N'$. Hence, the sequence $n_i$ is divergent and $h_i \tilde \varphi^{n_i}(x)$ converges in $\tilde M$. In particular, there exists $h \in H$ such that $hh_i\tilde \varphi^{n_i}(D) \cap D \neq \emptyset$ for all $i$. This implies that $h h_i \in - \Omega(\tilde \varphi^{n_i})$, and therefore $(h_i, n_i)$ is contained in a bounded neighborhood of $\Omega$. Since $n_i$ is a divergent sequence and $\alpha(h_i, n_i) = 0$, $\alpha$ vanishes at some vector in the asymptotic cone of $\Omega$. This contradicts $\alpha \in \C$.

	Now to verify Theorem \ref{thm.fried}(1), let $(x, s) \in \tilde N' = \tilde M \times \R$. For each $t \in \R$, there exist $(h_t, n_t) \in \alpha^{\perp}$ and $k_t \in \Z$ such that $$(g, m)^{k_t}(h_t, n_t)\cdot (x, s + t) = (g^{k_t}h_t \tilde \varphi^{k_t m + n_t}(x), s + t + k_t m + n_t)$$ is contained in a fixed compact subset. By the previous claim, as $t \to \infty$, we have either $k_t \to \infty$ or $k_t \to - \infty$. After passing to a subsequence, we may assume that the sequence $$(g^{k_t}h_t \tilde \varphi^{k_t m + n_t}(x), s + t + k_t m + n_t)$$ converges in $\tilde N'$. This implies that 
	\begin{equation} \label{eqn.negativeone}
		\frac{k_t m + n_t}{t} \to -1
	\end{equation} as $t \to \infty$ and for some $h \in H$, we have $$hg^{k_t}h_t \tilde \varphi^{k_t m + n_t}(D) \cap D \neq \emptyset$$ for all large $t > 0$. Hence $(g, m)^{k_t} (h_t, n_t) = (g^{k_t}h_t, k_t m + n_t)$ is of bounded distance from some point $(y_t, k_t m + n_t) \in - \Omega (\tilde \varphi^{k_t m + n_t}) \times \{ k_t m + n_t\}$. Therefore,
	$$\lim_{t \to \infty} \frac{k_t}{k_t m + n_t} = \lim_{t \to \infty} \frac{\alpha((g, m)^{k_t}(h_t, n_t))}{k_t m + n_t} = \lim_{t \to \infty} \alpha \left( \frac{y_t}{k_t m + n_t}, 1 \right).$$
	Since the vector $ \left( \frac{y_t}{k_t m + n_t}, 1 \right)$ converges to the vector in the asymptotic cone of $\Omega$ and $\alpha \in \C$, we obtain
	$$\lim_{t \to \infty} \frac{k_t}{k_t m + n_t} > 0.$$ Together with \eqref{eqn.negativeone}, we have that $k_t \to - \infty$ as $t \to \infty$. Note that the positivity of $t$ has not been used. Applying the same argument to the case $t \to - \infty$, we conclude that $k_t \to \mp \infty$ as $t \to \pm \infty$. Consequently, in the compactification of $\tilde N'/\alpha^{\perp}$, each $\tilde \F$-flow line is from the end $(g, m)^{-\infty}$ to the end $(g, m)^{\infty}$.
	This verifies Theorem \ref{thm.fried}(1), showing the first statement of the proposition.

	The last assertion follows from the observation that $\alpha(0, 1) = n_{\alpha}$, and hence $(g, m)^{n_{\alpha}}(0, -1) \in \alpha^{\perp}$. 
\end{proof}

The converse might be standard to experts, but we present the proof as follows, completing the proof of Theorem \ref{thm.genfibfried}:

\begin{prop}
	Let $\alpha \in \Gamma^*$ be a primitive integral class. If $N$ admits a circle fibration respecting the flow $\F$ associated to $\alpha$, then $\alpha \in \C$.
\end{prop}

\begin{proof}
Again, we consider the $\Z$-cover $$\tilde N' / \alpha^{\perp} \to N.$$ Then from the hypothesis, the flow $\F$ admits a cross section that can be lifted to $\tilde N' / \alpha^{\perp}$. Recall the commutative diagram:
$$\begin{tikzcd}
	\tilde{N} \arrow[d] & \tilde{N}'=\tilde{M}\times\mathbb{R} \arrow[l, "\ \Z- \mbox{fold} \ "] \arrow[ld, dashed, "\mbox{Deck group } \Gamma = H \oplus \Z"] \\
	N & \end{tikzcd}$$
	We also denote by $\tilde \F$ and $\tilde \F'$ the lifts of the flow $\F$ on $\tilde N' / \alpha^{\perp}$ and $\tilde N'$ respectively.

Let $h_n \in H$, $n \in \Z$, be a sequence such that $$(h_n \tilde \varphi^n \cdot D) \cap D \neq \emptyset.$$
To prove $\alpha \in \C$, it suffices to show that the sequence $\alpha(h_n, n)/n$ is a positive for all but finitely many $n$, and does not accumulate to $0$. Fix an element $(g, m) \in \Gamma$ such that $\alpha(g, m) = 1$. This acts as a Deck transformation on $\tilde N' / \alpha^{\perp}$.

For each $n \in \Z$, let $x_n \in D$ be such that $h_n \tilde \varphi^n(x_n) \in D$.  Then for each $n \in \Z$, there exists $k_n \in \Z$ and $(g_n, t_n) \in \alpha^{\perp}$ such that $$(g, m)^{k_n}(g_n, t_n)(h_n, n)\cdot \tilde \F'_{-n}(x_n, 0) \in \tilde N'$$ is contained in the fundamental domain for the $\Gamma$-action on $\tilde N'$ containing $D \times \{0\}$.
Expanding the action, this means that
$$(g^{k_n}g_n \tilde \varphi^{k_n m + t_n} h_n \tilde \varphi^{n}(x_n), k_n m + t_n) \in \tilde N'$$ belongs to the $\Gamma$-fundamental domain on $\tilde N'$. In particular, the sequence $k_n m + t_n \in \Z$ is bounded. Since $h_n \tilde \varphi^{n}(x_n) \in D$ for all $n \in \Z$, we now have that the sequence $\tilde \varphi^{k_n m + t_n} h_n \tilde \varphi^{n}(x_n)$ is bounded in $\tilde M$ as well. Together with the boundedness of the sequence $g^{k_n}g_n \tilde \varphi^{k_n m + t_n} h_n \tilde \varphi^{n}(x_n) \in \tilde M$, it follows that the sequence $g^{k_n}g_n \in H$ is bounded. Therefore, the sequence $(g, m)^{k_n}(g_n, t_n) = (g^{k_n} g_n, k_n m + t_n) \in \Gamma$ is bounded.

Now for each $n \in \Z$, let $(z_n, s_n) = (g, m)^{k_n}(g_n, t_n)(h_n, n)\cdot \tilde \F'_{-n}(x_n, 0) \in \tilde N'$.
We then have
$$(g, m)^{k_n}(g_n, t_n)(h_n, n)\cdot (x_n, 0) = \tilde \F'_{n}(z_n, s_n) \in \tilde N'.$$
If we denote by $\tilde \alpha'$ the 1-form on $\tilde N'$ induced by $\alpha$, we have
$$\alpha((g, m)^{k_n}(g_n, t_n)(h_n, n)) = \int_{(x_n, 0)}^{(z_n, s_n)} \tilde \alpha' + \int_{(z_n, s_n)}^{\tilde \F_{n}'(z_n, s_n)} \tilde \alpha'.$$
Since $(x_n, 0)$ and $(z_n, s_n)$ are contained in the fixed fundamental domain of the $\Gamma$-action on $\tilde N'$, $\int_{(x_n, 0)}^{(z_n, s_n)} \tilde \alpha'$ is bounded. Moreover, since the return time for the flow $\F$ to the fiber $\tilde M / \alpha^{\perp}$ is bounded from below and above by positive constants, there exist $c, c' > 1$ such that
$$\begin{aligned}
c^{-1} n - c' & \le \alpha((g, m)^{k_n}(g_n, t_n)(h_n, n)) \le c n + c' & \quad \text{for } n \ge 0 \\
c n - c' & \le \alpha((g, m)^{k_n}(g_n, t_n)(h_n, n)) \le c^{-1} n + c' & \quad \text{for } n < 0.
\end{aligned}$$
Since the sequence $(g, m)^{k_n}(g_n, t_n)$ is bounded, this finishes the proof.
\end{proof}

The following lemma will be used later:

\begin{lem} {\cite[Lemma 5.3]{baik2018upper}} \label{lem:geomarg}
Let $\mathcal{C}_0$ be a proper subcone of the generalized fibered cone. There exists $C > 0$ such that for any primitive integral element $\alpha = (\cdot, n_{\alpha}) \in \mathcal{C}_0$ with $n_{\alpha} > C$, there is some $h \in H$ which does not belong to the $C n_{\alpha}^{1/d}$-neighborhood of $\bigcup_{a \in \alpha^{\perp}} \Omega(a)$ in $H$.
\end{lem}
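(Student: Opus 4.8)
The plan is to follow the geometric argument of \cite[Lemma 10]{baik2018upper}, adapted to the generalized fibered cone, using the two estimates that the excerpt has already extracted just before the statement. Concretely: fix a primitive integral $\alpha = (\cdot, n_\alpha)$ in a proper subcone of the generalized fibered cone with $n_\alpha$ large, and let $x = (x_1,\dots,x_d)$ denote the projection of $\alpha^\perp$ onto $H \otimes \R$. The first estimate, from Proposition~\ref{prop:non_empty}, says that for $b = (x_1,\dots,x_d,y) \in \alpha^\perp$ the set $\Omega(b)$ lies within distance $\sim |y|$ of the origin (more precisely $d(\pm\Omega(\tilde\varphi^y),0) \le A|y| + C$, and $\Omega(b)$ is a translate of this by the $H$-part of $b$). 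The second estimate, from the fact that $\alpha$ lies in a \emph{proper} subcone of the asymptotic dual cone of $\Omega$, says that the $H$-part of any $b \in \alpha^\perp$ is forced to be large in the direction transverse to $\Omega(\tilde\varphi^y)$: namely $d(x, -\Omega(\tilde\varphi^y)) \ge A'|y| - C'$. Combining these, every $\Omega(b)$ for nonzero $b \in \alpha^\perp$ is a bounded-diameter set sitting at distance $\gtrsim |y|$ from the origin along a controlled direction.

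Next I would bring in the lattice-covolume input. The projection of $\alpha^\perp$ onto $H$ is a finite-index subgroup of a rank-$d$ lattice whose covolume is $\gtrsim |n_\alpha|$: indeed $\alpha^\perp \cap H$ has index in $H$ comparable to $|n_\alpha|$ since $\alpha$ is primitive with last coordinate $n_\alpha$, and the extra $\Z$-direction only enlarges things in the bounded way controlled by the first estimate. So the union $\bigcup_{a \in \alpha^\perp}\Omega(a)$ is contained in a union of uniformly-bounded-diameter blobs, one near each point of a rank-$d$ pattern of covolume $\gtrsim n_\alpha$, with the blobs away from the origin displaced so that they do not ``fill in'' a neighborhood of the origin transversally (this is exactly what the lower bound $d(x,-\Omega(\tilde\varphi^y)) \ge A'|y|-C'$ buys us). A volume/pigeonhole count in $H \otimes \R \cong \R^d$ then shows that a ball of radius $\sim n_\alpha^{1/d}$ about the origin cannot be entirely covered by these blobs: the covered volume is at most (number of relevant lattice points within distance $\sim n_\alpha^{1/d}$) times (bounded blob volume), which is $O(1) \cdot (n_\alpha^{1/d})^d / n_\alpha = O(1)$, too small to cover a ball of volume $\sim n_\alpha$. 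Hence there is $h \in H$ outside the $(Cn_\alpha^{1/d})$-neighborhood of $\bigcup_{a\in\alpha^\perp}\Omega(a)$, with $C$ uniform.

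I expect the main obstacle to be the bookkeeping that makes the constant $C$ genuinely uniform over all primitive integral $\alpha$ in the proper subcone simultaneously, rather than depending on $\alpha$. The two linear estimates ($d(\Omega(\tilde\varphi^y),0) \le A|y|+C$ and $d(x,-\Omega(\tilde\varphi^y)) \ge A'|y|-C'$) have constants depending only on $\tilde\varphi$, the fundamental domain $D$, and the angle of the proper subcone (via Remark~\ref{rmk:indepfund} and properness), so this is really a matter of chaining them carefully; but one must check that the contribution of the $\Z$-direction $y$ to the size of $\Omega(b)$ is absorbed correctly and does not secretly depend on $n_\alpha$. The covolume lower bound $\gtrsim |n_\alpha|$ for the projection of $\alpha^\perp$ to $H$ also deserves a careful statement, since $\alpha^\perp \subseteq \Gamma = H \oplus \Z$ and one projects to $H$; this is standard but is the other place where uniformity could slip. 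Once these are pinned down, the counting argument is routine, and the lemma follows exactly as in \cite[Lemma 10]{baik2018upper}, to which I would refer for the remaining details.
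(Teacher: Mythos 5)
Your overall route is the same as the paper's (which itself only records the two estimates and the covolume fact and then refers to \cite[Lemma 10]{baik2018upper}), but the counting step you sketch is where the argument genuinely breaks. First, the sets $\Omega(b)$ for $b=(x,y)\in\alpha^{\perp}$ are \emph{not} uniformly bounded: by the proof of Proposition \ref{prop:non_empty}, $\Omega(\tilde{\varphi}^{y})$ is only contained in a sum of $|y|$ copies of a fixed polytope, so both its diameter and its displacement from $x$ grow linearly in $|y|$; the blobs that can approach a ball of radius $\sim n_{\alpha}^{1/d}$ have diameter $O(n_{\alpha}^{1/d})$, not $O(1)$. Second, and more seriously, the pigeonhole count in a ball of radius $\sim n_{\alpha}^{1/d}$ about the origin is false: a lattice $\Lambda'\subseteq H$ of covolume $\sim n_{\alpha}$ need not have only $O(R^{d}/n_{\alpha})$ points in $B(0,R)$ when $R\sim n_{\alpha}^{1/d}$ is far below its covering radius. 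Concretely, for $\alpha=(1,0,\dots,0,n_{\alpha})$ (which lies in any fixed proper subcone around the monodromy class once $n_{\alpha}$ is large), the projection of $\alpha^{\perp}$ to $H$ contains the entire hyperplane lattice $\{x_{1}=0\}$, all of whose elements have $y=0$ and whose blobs contain points near the corresponding lattice points (since $0\in\Omega(\mathrm{id})$); the $(Cn_{\alpha}^{1/d})$-neighborhood of these blobs already swallows the whole ball $B(0,Cn_{\alpha}^{1/d})$, so no admissible $h$ exists near the origin at all. The point you are missing is that the covolume bound is used \emph{globally}: it bounds the covering radius of the projected lattice from below by $\gtrsim n_{\alpha}^{1/d}$, and the sought $h$ must be located near a deep hole of that lattice, possibly at distance comparable to $n_{\alpha}$ from the origin (in the example above, near $(n_{\alpha}/2,0,\dots,0)$). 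One then has to rerun the transversality estimate relative to the candidate $h$ rather than the origin --- i.e.\ bound $d\bigl(h, x+\Omega(\tilde{\varphi}^{y})\bigr)$ from below by a multiple of $|y|$ minus an error controlled by the position of $h$ relative to $\ker\alpha_{H}$ --- to discard the blobs with large $|y|$, and only then does the remaining family of blobs have size $O(n_{\alpha}^{1/d})$ and sit near lattice points that the deep-hole choice of $h$ avoids.

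Two smaller points. Your justification of the covolume fact conflates $\alpha^{\perp}\cap H$ (which has rank $d-1$ and infinite index in $H$) with the image of the projection $\alpha^{\perp}\to H$; the correct statement is that this image equals $\{x\in H:\alpha_{H}(x)\equiv 0 \bmod n_{\alpha}\}$, of index exactly $n_{\alpha}$ by primitivity. Also, as you note, the uniformity of the constants $A',C'$ requires $\alpha$ to range over a fixed proper subcone (the paper's preceding paragraph assumes this even though the lemma's wording does not); that caveat is fine, but it does not repair the counting issue above.
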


\begin{proof}
	The definition of generalized fibered cone implies that $\Omega$ is contained in a Hausdorff neighborhood of the dual cone of $\mathcal{C}_0$. Let $\alpha$ be a primitive integral element in $\mathcal{C}_0$, $b = (x_1, \ldots, x_d, y) \in \alpha^{\perp}$, and $p \in \Omega(b)$. Then Proposition \ref{prop:non_empty} implies $d(\pm \Omega(\tilde{\varphi}^y), 0) \le Ay + C$ for some $A, C > 0$. Furthermore, the fact that $\Omega$ is contained in a Hausdorff neighborhood of $\mathcal{C}_0$ implies that $x := (x_1, \ldots, x_d)$ satisfies $d(x, -\Omega(\tilde{\varphi}^y)) \ge A'y - C'$ for some $A', C' > 0$. This observation implies the same statement as \cite[Lemma 5.1]{baik2018upper}. Together with the fact that the covolume of the projection of $\alpha^{\perp}$ onto $H$ is $\gtrsim |n_{\alpha}|$ (cf. \cite[Lemma 5.2]{baik2018upper}), the same proof as in \cite[Lemma 5.3]{baik2018upper} works and gives the desired statement.
\end{proof}


\section{Asymptotic translation lengths on sphere complexes}\label{sec:mainA}

Now we prove the main theorem:

\mainthma*

We deduce it from the following weaker version:
\begin{thm} \label{thm.weaker}
Let $\varphi : M\to M$ be a diffeomorphism of a compact manifold to itself. Consider the generalized fibered cone of the mapping torus of $\varphi$, and let $\mathcal{R}$ be an intersection of a $d+1$-dimensional rational subspace 
	and a proper subcone of the generalized fibered cone. Then there exist finitely many hyperplanes $R_1, \cdots, R_k$, disjoint from the cohomology class corresponding to $\varphi$, such that \[
	l({\varphi_{\alpha}})\lesssim \lVert \alpha \rVert^{-1-1/d}
	\] for all primitive integral element $\alpha \in \mathcal{R} - \bigcup_{i = 1}^k R_i$ where $\varphi_{\alpha}$ is the corresponding monodromy.
\end{thm}

\begin{proof}
By Lemma \ref{lem:geomarg}, there exists $C > 0$ depending on $\mathcal{R}$ such that for any primitive integral element $\alpha = (\cdot, n_{\alpha}) \in \mathcal{R}$ in coordinates of $\Gamma^* = (H \oplus \Z)^*$ with $n_{\alpha} > C$, there exists $h \in H$ such that the $C n_{\alpha}^{1/d}$-neighborhood of $h$ in $H$ is disjoint from $\bigcup_{a \in \alpha^{\perp}} \Omega(a)$. In particular, $h$ is disjoint from $\bigcup_{a \in \alpha^{\perp}} \Omega(a)$ and hence 
\begin{equation} \label{eqn.disjoint}
(h \cdot D) \cap (\alpha^{\perp} \cdot D) = \emptyset
\end{equation}

We claim that we can choose $c > 0$ small enough so that for any such $\alpha$ and $h$ above, we have
\begin{equation} \label{eqn:mainthm}
	\left(\left( h \tilde{\varphi} ^{\left\lfloor cn_{\alpha}^{1/d}\right\rfloor} \right) \cdot D\right) \cap (\alpha^{\perp} \cdot D) = \emptyset.
	\end{equation} 
	See Figure \ref{fig:mainthm} for the pictorial description of the claim.
	\begin{figure}[h]
		\begin{tikzpicture}[scale=1.5, every node/.style={scale=0.9}]
		
		\draw[pattern=dots] (-1, -1) -- (-1.8, -0.5) -- (-2, 0) -- (0.3, -0.7) -- (-1, -1);
		\draw (-1.4, -0.8) node[left] {$\Omega(a_1)$};
		
		\draw[dashed, fill=white!8] (2.5, -0.5) arc(0:360:1);
		
		\draw[dashed, fill = white!20] (2.5, -0.5) .. controls (2.5, -0.2) and (2, -0.05) .. (1.5, -0.05) .. controls (1, -0.05) and (0.5, -0.2) .. (0.5, -0.5) .. controls (0.5, -0.7) and (1, -0.95) .. (1.5, -0.95) .. controls (2, -0.95) and (2.5, -0.7) .. (2.5, -0.5);

		\draw[->] (1.5, -0.5) -- (2.5, -0.5);
		\draw (2, -0.55) node[above] {$C n_{\alpha}^{1/d}$};
		
		\draw[pattern=dots] (1, -1.3) -- (3, -0.5) -- (3, -1.1) -- (2.5, -1.5) -- (2, -1.5) -- (1, -1.3);
		\draw (3, -1) node[right] {$\Omega(a_2)$};
		
		\draw[->, thick] (0, 0) -- (0, 1);
		\draw (0, 1) node[left] {$\tilde{\varphi}$};

		\draw[->] (2, 1) .. controls (1.8, 1) and (1.75, 0.9) .. (1.75, 0.6) .. controls (1.75, 0.3) and (1.7, 0.2) .. (1.55, 0.2);

		\draw (2, 1.1) node[right] {$h\tilde{\varphi}^{\left \lfloor cn_{\alpha}^{1/d} \right \rfloor}$};
		
		\draw[->, thick] (0, 0) -- (3, 0);
		\draw[->, thick] (0, 0) -- (-1, -1.8);
		\draw (3, -1.8) node[left] {$H \otimes \R$};
		
		\draw (1.5, 0.2) -- (1.5,-0.5);
		\draw (1.5, -0.4) -- (1.6, -0.4) -- (1.6, -0.5);
		\filldraw (1.5, 0.2) circle(1pt);
		\filldraw (1.5, -0.5) circle(1pt);
		\draw (1.5, -0.5) node[left] {$h$};
		
		\draw (0, -2);

		\end{tikzpicture}
		\caption{Description of $\Gamma \otimes \R$. The subspace $H \otimes \R$ is illustrated as a horizontal plane. $a_1, a_2 \in \alpha^{\perp}$ and the dotted regions are $\Omega(a_1)$ and $\Omega(a_2)$. The ball centered at $h$ is of radius $Cn_{\alpha}^{1/d}$ in $\Gamma \otimes \R$. The constant $c$ is chosen appropriately so that $h\tilde{\varphi}^{\left\lfloor cn_{\alpha}^{1/d}\right\rfloor}$ belongs to the ball.} \label{fig:mainthm}
		\end{figure}
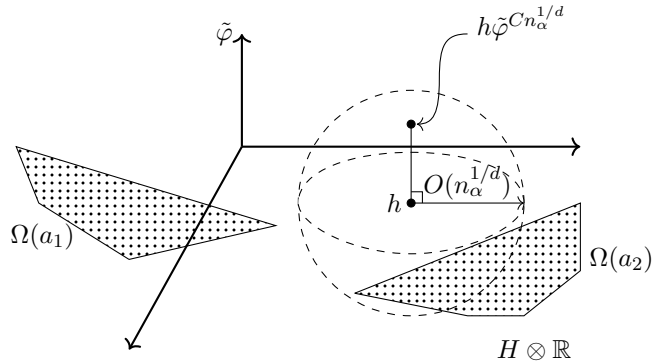

	Let $c > 0$ be a constant. We prove the claim by showing that we can take $c$ small enough so that for any $\alpha$ and $h$ as above, we have 
	\begin{equation} \label{eqn:equivdisj} 
		h \cdot D \cap \left(\alpha^{\perp} \tilde{\varphi}^{-\left\lfloor cn_{\alpha}^{1/d}\right\rfloor}\right) \cdot D = \emptyset.
	\end{equation}
	Fix such $\alpha$ and $h$, and suppose that 
	\begin{equation} \label{eqn.detailclaim}
		h \cdot D \cap \left(a \tilde{\varphi}^{-\left\lfloor cn_{\alpha}^{1/d}\right\rfloor}\right) \cdot D \neq \emptyset
	\end{equation}
	for some $a \in \alpha^{\perp}$. Writing $a = x \tilde \varphi^m$ for some $x \in H$ and $m \in \Z$, we have
	$$\left( x^{-1}h \cdot D \right) \cap \left( \tilde{\varphi}^{m -\left\lfloor cn_{\alpha}^{1/d}\right\rfloor} \cdot D \right)  \neq \emptyset.$$
	Then there exists $y \in \tilde{\varphi}^{ -\left\lfloor cn_{\alpha}^{1/d}\right\rfloor} \cdot D$ such that $\tilde \varphi^m y \in x^{-1} h \cdot D$. We can also choose $h_1 \in H$ such that $y \in h_1 \cdot D$. This implies that
	$$\left( x^{-1} h \cdot D\right) \cap \left( \tilde \varphi^m h_1 \cdot D \right) \neq \emptyset \quad \text{and} \quad \left( h_1 \cdot D \right) \cap \left( \tilde{\varphi}^{ -\left\lfloor cn_{\alpha}^{1/d}\right\rfloor} \cdot D \right) \neq \emptyset.$$
	In particular, $h_1^{-1}x^{-1}h \in \Omega(\tilde \varphi^m)$ and $h_1 \in \Omega \left(  \tilde{\varphi}^{ -\left\lfloor cn_{\alpha}^{1/d}\right\rfloor} \right)$.
	Therefore, we have
	$$h \in x \Omega(\tilde \varphi^m) \Omega \left(  \tilde{\varphi}^{ -\left\lfloor cn_{\alpha}^{1/d}\right\rfloor} \right).$$
	Since $x \Omega(\tilde \varphi^m) = \Omega(a)$ and the diameter of $\Omega  \left(  \tilde{\varphi}^{ -\left\lfloor cn_{\alpha}^{1/d}\right\rfloor} \right)$ is less than $c_0 c n_{\alpha}^{1/d}$ for some constant $c_0 > 0$ depending only on $\mathcal{R}$ by Proposition \ref{prop:non_empty}, we now have that $h$ is contained in the $c_0 c n_{\alpha}^{1/d}$-neighborhood of $\Omega(a)$ in $H$. On the other hand, $h$ was chosen so that the $C n_{\alpha}^{1/d}$-neighborhood of $h$ is disjoint from $\bigcup_{a \in \alpha^{\perp}} \Omega(a)$. Consequently, if we choose $c < C/c_0$, then \eqref{eqn.detailclaim} cannot happen, and hence such $c$ satisfies \eqref{eqn:mainthm}, proving the claim.

	Now fix $c > 0$ satisfying \eqref{eqn:mainthm}. 
	Since $\mathcal{R}$ is contained in a proper subcone of the generalized fibered cone, the set $\{ w \in \Omega : \alpha(w) = 0 \text{ for some } \alpha \in \mathcal{R}\}$ is finite. Denote by $w_1, \cdots, w_k$ its elements and set $R_i := \{ \alpha \in \Gamma^* \otimes \R : \alpha(w_i) = 0\}$ for $i = 1,\cdots, k$.
	As a result, we have finitely many hyperplanes $R_1, \cdots, R_k \subset \Gamma^* \otimes \R$ such that for any primitive integral element $\alpha \in \mathcal{R} - \bigcup_{i = 1}^k R_i$, we have $\alpha^{\perp} \cap \Omega = \{0\}$. Since the cohomology class corresponding to $\varphi$ is the dual of $(0, 1) \in \Gamma$, the hyperplanes $R_1, \cdots, R_k$ are disjoint from this cohomology class.
	This implies that for any $\alpha \in \mathcal{R} - \bigcup_{i = 1}^k R_i$ and non-trivial $a \in \alpha^{\perp}$, we have $(a \cdot D) \cap D = \emptyset$. Indeed, if $a = (x, m)$ in the coordinate of $\Gamma = H  \oplus \Z$, then $(a \cdot D) \cap D \neq \emptyset$ implies $$(x^{-1} \cdot D) \cap (\tilde \varphi^m \cdot D) \neq \emptyset.$$ Hence, $x^{-1} \in \Omega(\tilde \varphi^m)$, and therefore $$a = (x, m) \in - \Omega(\tilde \varphi^m) \times \{m\} \subset \Omega$$ which is a contradiction.

	Now let $\alpha \in \mathcal{R} - \bigcup_{i = 1}^k R_i$ with $n_{\alpha} > C$, and $h \in H$ be the element given by Lemma \ref{lem:geomarg}. We choose embedded spheres $S_1$ and $S_2$ in $D$ and $h \cdot D$ respectively. Since the translate of $D$ by any non-trivial element of $\alpha^\perp$ is disjoint from $D$ as observed in the previous paragraph, $S_1$ and $S_2$ are injected to the fiber $M_{\alpha} = \tilde M / \alpha^{\perp}$, and hence are vertices of the sphere complex $\mathcal{S}(M_{\alpha})$. Abusing notations, we also denote by $S_1$ and $S_2$ their images in $M_{\alpha}$. By \eqref{eqn.disjoint}, $S_1$ and $S_2$ are disjoint in $M_{\alpha}$, and therefore they represent adjacent vertices in $\mathcal{S}(M_{\alpha})$. Now it follows from  Proposition \ref{prop:gen_fibration} and \eqref{eqn:mainthm} that 
	$$\begin{aligned}
	d_{\mathcal{S}(M_{\alpha})}\left(S_2, \varphi_{\alpha}^{n_{\alpha} \cdot \left \lfloor c n_{\alpha}^{1/d} \right \rfloor }(S_2)\right) & \le d_{\mathcal{S}(M_{\alpha})}(S_2, S_1) + d_{\mathcal{S}(M_{\alpha})}\left(S_1, \varphi_{\alpha}^{n_{\alpha} \cdot  \left \lfloor c n_{\alpha}^{1/d} \right \rfloor }(S_2)\right) \\
	 & = 1 + d_{\mathcal{S}(M_{\alpha})}\left(S_1, {\varphi}^{\left \lfloor c n_{\alpha}^{1/d}\right \rfloor} (S_2)\right) = 2.
	\end{aligned}$$
	
	Therefore, we can estimate the asymptotic translation length of $\varphi_{\alpha}$ as follows: $$	l(\varphi_{\alpha}) \le \limsup_{m \to \infty} {d_{\mathcal{S}(M_{\alpha})}\left(S_2, \varphi_{\alpha}^{ n_{\alpha} \cdot  \left \lfloor c n_{\alpha}^{1/d} \right \rfloor \cdot m} (S_2)\right) \over n_{\alpha} \cdot \left \lfloor c n_{\alpha}^{1/d} \right \rfloor \cdot m} \le {2 \over n_{\alpha} \cdot \left \lfloor c n_{\alpha}^{1/d} \right \rfloor }.$$
	Since $n_{\alpha}$ is comparable to $\|\alpha\|$ for $\alpha \in \mathcal{R}$, this completes the proof of the estimate.
\end{proof}

\begin{proof}[Proof of Theorem \ref{theorem:mainthm}]
	By Theorem \ref{thm.weaker}, we have finitely many hyperplanes $R_1, \cdots, R_k$ so that the desired inequality \eqref{eqn.mainest} holds on $\mathcal{R} - \bigcup_{i = 1}^k R_i$. 
	
	Now for each $R_i$, choose a primitive integral element $\alpha_i \in R_i \cap \mathcal{R}$. By Proposition \ref{prop:gen_fibration}, this corresponds to a fibration $N \to S^1$ with the monodromy $\varphi_{\alpha_i}$ and the fiber $M_{\alpha_i} = \tilde M / \alpha_i^{\perp}$, with respect to the flow $\F$. We now apply Theorem \ref{thm.weaker} to $\varphi_{\alpha_i} : M_{\alpha_i} \to M_{\alpha_i}$. Since its mapping torus is also $N$ and the associated flow is $\F$ as well, the generalized fibered cone for $\varphi_{\alpha_i}$, which is equal to Fried's cone for $\F$ in $H^1(N)$ by Theorem \ref{thm.genfibfried}, is the same as the generalized fibered cone for the original diffeomorphism $\varphi : M \to M$.  In particular, $\mathcal{R}$ is still an intersection of a $d + 1$-dimensional rational subspace and a proper subcone of the generalized fibered cone for $\varphi_{\alpha_i}$. Hence, we can apply Theorem \ref{thm.weaker} to $\varphi_{\alpha_i} : M_{\alpha_i} \to M_{\alpha_i}$ and get finitely many hyperplanes $R_1^{(i)}, \cdots, R_{n_i}^{(i)}$ so that $\alpha_i \notin \bigcup_{j = 1}^{n_i} R_j^{(i)}$ and \eqref{eqn.mainest} holds on $\mathcal{R} - \bigcup_{j = 1}^{n_i} R_j^{(i)}$. Since $\alpha_i \in R_i$ while $\alpha_i$ is disjoint from $\bigcup_{j = 1}^{n_i} R_j^{(i)}$, the intersection 
	$R_i \cap R_j^{(i)}$ is a codimension 2 subspaces in $H^1(N)$.

	By the above argument, we now have finitely many codimension 2 subspaces $R_i \cap R_j^{(i)}$, $i = 1, \cdots, k$ and $j = 1, \cdots, {n_i}$, so that \eqref{eqn.mainest} holds on $\mathcal{R} - \bigcup_{i = 1}^{k} \bigcup_{j = 1}^{n_i} R_i \cap R_j^{(i)}$. By proceeding the above argument inductively, we finally obtain finitely many one-dimensional subspaces $L_1, \cdots, L_\ell$ in $H^1(N)$ such that \eqref{eqn.mainest} holds on $\mathcal{R} - \bigcup_{i = 1}^{\ell} L_{i}$. Since there are only finitely many primitive integral elements in $\bigcup_{i = 1}^{\ell} L_{i}$, this completes the proof.
\end{proof}


\section{Minimal translation lengths of Handlebody groups} \label{sec.handlebody}

Recall that we denote by $\H_g$ and $\D_g$ the handlebody group and the disk graph of a closed connected orientable surface of genus $g$ respectively.
In this section, we prove:

\mainthmhandle*

\begin{proof}
	We first deduce the lower bound from \cite{gadre2011minimal}. Recall that the inclusion $(D^2, \partial D^2) \subseteq (V_g, \partial V_g)$ induces a map from the vertices of $\D_g$ to the vertices of $\C_g$. Moreover, two non-isotopic disjoint disks in $V_g$ have non-isotopic disjoint boundary as noted in \cite[Lemma 2.3]{hensel2018primer}. This implies that the above map on vertices indeed gives the graph embedding $\mathcal{D}_g \to \mathcal{C}_g$. In other words, the disk graph $\mathcal{D}_g$ is a subgraph of the curve graph of $\partial V_g$, and therefore $$L_{\C_g}(\H_g) \le L_{\D_g}(\H_g)$$ for all $g \ge 2$. Since $L_{\C_g}(\Mod(S_g)) \asymp 1/g^2$ as proved by Gadre--Tsai \cite{gadre2011minimal} and $L_{\C_g}(\H_g) \ge L_{\C_g}(\Mod(S_g))$, the lower bound follows. 

	To prove the upper bound, we apply our main argument together with a similar construction as in \cite[Section 6]{hironaka2011fibered}. We begin with a genus 2 handlebody $V_2$ and a Torelli pseudo-Anosov $\varphi \in \H_2$. Such an element $\varphi$ indeed exists: consider a separating curve $b\subset \partial V_2$ which bounds a disk in $V_2$ as in Figure \ref{fig.handlebody2}. Taking any pseudo-Anosov $\varphi_0 \in \H_2$, two separating curves $b$ and $\varphi_0^n b$ fill $\partial V_2$ for large enough $n \in \N$ by \cite{MasurMinsky99}, and they bound disks in $V_2$. Therefore, $\varphi$ can be obtained by Thurston's construction \cite{thurston1988geometry} or Penner's construction \cite{penner1988construction} applied to the pair $b$ and $\varphi_0^n b$.

	\begin{figure}[h]
		\centering
		\begin{tikzpicture}[scale=1.2, every node/.style={scale=1}]
		\draw (-1.6, 0) .. controls (-1.6, 1) and (-0.6, 0.5) .. (0, 0.5) .. controls (0.6, 0.5) and (1.6, 1) .. (1.6, 0);
		\begin{scope}[rotate=180]
			\draw (-1.6, 0) .. controls (-1.6, 1) and (-0.6, 0.5) .. (0, 0.5) .. controls (0.6, 0.5) and (1.6, 1) .. (1.6, 0);
		\end{scope}
		
		\draw (-1.1, 0.1) .. controls (-1, -0.1) and (-0.6, -0.1) .. (-0.5, 0.1);
		\draw (-1, 0) .. controls (-0.9, 0.1) and (-0.7, 0.1) .. (-0.6, 0);
		
		\draw (1.1, 0.1) .. controls (1, -0.1) and (0.6, -0.1) .. (0.5, 0.1);
		\draw (1, 0) .. controls (0.9, 0.1) and (0.7, 0.1) .. (0.6, 0);
		
		\draw[red] (1, 0) .. controls (1.1, 0.25) and (1.5, 0.25) .. (1.6, 0);
		\draw[red, dashed] (1, 0) .. controls (1.1, -0.25) and (1.5, -0.25) .. (1.6, 0);
		
		\draw[red] (1.3, 0.3) node {$a$};
		
		\draw[blue] (0, 0.5) .. controls (0.2, 0.3) and (0.2, -0.3) .. (0, -0.5);
		\draw[blue, dashed] (0, 0.5) .. controls (-0.2, 0.3) and (-0.2, -0.3) .. (0, -0.5);
		
		\draw[blue] (0, -0.5) node[below] {$b$};
	\end{tikzpicture}
	\caption{A handlebody of genus 2} \label{fig.handlebody2}
\end{figure}

Now consider the homomorphism $H_1(\partial V_2) \to \Z$ given by the algebraic intersection number with $[a] \in H_1(\partial V_2)$ in Figure \ref{fig.handlebody2}. This induce $\Z$-covers $\tilde V \to V_2$ and $\partial \tilde V \to \partial V_2$ as in Figure \ref{fig.cyclic}. Since $\varphi \in \H_2$ is Torelli, it can be lifted to both covers. 

\begin{figure}[h]
	\centering
	\begin{tikzpicture}[scale=1.2, every node/.style={scale=1}]
	\begin{scope}[rotate=-90]
		\filldraw (0, 3.5) circle(1pt);
		\filldraw (0, 3.7) circle(1pt);
		\filldraw (0, 3.9) circle(1pt);

		\filldraw (0, -3.5) circle(1pt);
		\filldraw (0, -3.7) circle(1pt);
		\filldraw (0, -3.9) circle(1pt);
	
		\draw (-1.6, 0) .. controls (-1.6, 1) and (-0.6, 0.5) .. (0, 0.5) .. controls (0.2, 0.5) and (0.6, 0.7) .. (0.6, 1);
		\draw[red] (0.6, 1) .. controls (0.7, 1.25) and (1.1, 1.25) .. (1.2, 1);
		\draw[red, dashed] (0.6, 1) .. controls (0.7, 1-0.25) and (1.1, 1-0.25) .. (1.2, 1);

		\draw (1.2, 1) .. controls (1.2, 0.8) and (1, 0.5) .. (1, 0);

		\draw[thick, teal] (-1, 0) .. controls (-1.1, 0.25) and (-1.5, 0.25) .. (-1.6, 0);
		\draw[thick, dashed, teal] (-1, 0) .. controls (-1.1, -0.25) and (-1.5, -0.25) .. (-1.6, 0);
		
		\draw[teal] (-1.6, 0) node[above] {$z$};

		\begin{scope}[yscale=-1]

			\draw (-1.6, 0) .. controls (-1.6, 1) and (-0.6, 0.5) .. (0, 0.5) .. controls (0.2, 0.5) and (0.6, 0.7) .. (0.6, 1);
			\draw[red, dashed] (0.6, 1) .. controls (0.7, 1.25) and (1.1, 1.25) .. (1.2, 1);
			\draw[red] (0.6, 1) .. controls (0.7, 1-0.25) and (1.1, 1-0.25) .. (1.2, 1);

			\draw (1.2, 1) .. controls (1.2, 0.8) and (1, 0.5) .. (1, 0);
		\end{scope}
	
		\draw (-1.1, 0.1) .. controls (-1, -0.1) and (-0.6, -0.1) .. (-0.5, 0.1);
		\draw (-1, 0) .. controls (-0.9, 0.1) and (-0.7, 0.1) .. (-0.6, 0);
	

		\begin{scope}[shift={(0, 2)}]
			\draw (-1.6, 0) .. controls (-1.6, 1) and (-0.6, 0.5) .. (0, 0.5) .. controls (0.2, 0.5) and (0.6, 0.7) .. (0.6, 1);
			\draw[red] (0.6, 1) .. controls (0.7, 1.25) and (1.1, 1.25) .. (1.2, 1);
			\draw[red, dashed] (0.6, 1) .. controls (0.7, 1-0.25) and (1.1, 1-0.25) .. (1.2, 1);

			\draw (1.2, 1) .. controls (1.2, 0.8) and (1, 0.5) .. (1, 0);

			\begin{scope}[yscale=-1]

				\draw (-1.6, 0) .. controls (-1.6, 1) and (-0.6, 0.5) .. (0, 0.5) .. controls (0.2, 0.5) and (0.6, 0.7) .. (0.6, 1);

				\draw (1.2, 1) .. controls (1.2, 0.8) and (1, 0.5) .. (1, 0);
			\end{scope}
	
			\draw (-1.1, 0.1) .. controls (-1, -0.1) and (-0.6, -0.1) .. (-0.5, 0.1);
			\draw (-1, 0) .. controls (-0.9, 0.1) and (-0.7, 0.1) .. (-0.6, 0);
		\end{scope}


		\begin{scope}[shift={(0, -2)}]
			\draw (-1.6, 0) .. controls (-1.6, 1) and (-0.6, 0.5) .. (0, 0.5) .. controls (0.2, 0.5) and (0.6, 0.7) .. (0.6, 1);

			\draw (1.2, 1) .. controls (1.2, 0.8) and (1, 0.5) .. (1, 0);

			\begin{scope}[yscale=-1]

				\draw (-1.6, 0) .. controls (-1.6, 1) and (-0.6, 0.5) .. (0, 0.5) .. controls (0.2, 0.5) and (0.6, 0.7) .. (0.6, 1);
				\draw[red, dashed] (0.6, 1) .. controls (0.7, 1.25) and (1.1, 1.25) .. (1.2, 1);
				\draw[red] (0.6, 1) .. controls (0.7, 1-0.25) and (1.1, 1-0.25) .. (1.2, 1);

				\draw (1.2, 1) .. controls (1.2, 0.8) and (1, 0.5) .. (1, 0);
			\end{scope}
	
			\draw (-1.1, 0.1) .. controls (-1, -0.1) and (-0.6, -0.1) .. (-0.5, 0.1);
			\draw (-1, 0) .. controls (-0.9, 0.1) and (-0.7, 0.1) .. (-0.6, 0);
		\end{scope}

	\end{scope}
\end{tikzpicture}
\caption{Cyclic cover $\tilde V$} \label{fig.cyclic}
\end{figure}

Let $N$ be the mapping torus of $\partial V_2$ with the monodromy $\varphi$. Since $\varphi$ is pseudo-Anosov, $N$ is hyperbolic \cite{thurston1998hyperbolic}. Using the $\Z$-cover above, we can consider the generalized fibered cone $\C \subset H^1(N)$ which is two-dimensional. For each primitive integral $\alpha \in \C$, we know from Proposition \ref{prop:gen_fibration} that there is a fibration $N \to S^1$ with the monodromy $\varphi_{\alpha}$ and the fiber $S_{\alpha}$ which is the quotient of $\partial V$.
By \cite{hironaka2011fibered}, it follows that $\varphi_{\alpha}$ extends to the corresponding quotient of $\tilde V$ which is a handlebody.

Let $\mathcal{R} \subset \C$ be a two-dimensional proper subcone. For all but finitely many primitive integral element $\alpha \in \mathcal{R}$, as in the proof of Theorem \ref{thm.weaker}, we can choose an element $h$ in the Deck group and $k_{\alpha} \in \N$ comparable to $\|\alpha\|^2$ so that
the curve $z$ in Figure \ref{fig.cyclic} and its translate $h\cdot z$ inject into the fiber $S_{\alpha}$, and moreover $z$ has the image disjoint from images of both $h \cdot z$ and $\varphi_{\alpha}^{k_{\alpha}}(h \cdot z)$ in the quotient $S_{\alpha}$. We identify them with their images in $S_{\alpha}$.
Note that $z$, $h \cdot z$, and $\varphi_{\alpha}^{k_{\alpha}}(h\cdot z)$ bound disks in the handlebody. By \cite[Lemma 2.2]{hensel2018primer}, $z$ and $h\cdot z$ bound disjoint disks and similarly $z$ and $\varphi^{k_{\alpha}}_{\alpha}(h \cdot z)$ bound disjoint disks. Hence $h \cdot z$ and $\varphi_{\alpha}^{k_{\alpha}}(h \cdot z)$ represent vertices in the disk graph with distance at most 2. Since $\| \alpha \|$ is comparable to the genus $g_{\alpha}$ of $S_{\alpha}$, we conclude
$$l_{\D_{g_{\alpha}}}(\varphi_{\alpha}) \lesssim \frac{1}{g_\alpha^2}$$ for all primitive integral $\alpha \in \mathcal{R}$.
Since $N$ is hyperbolic, $\varphi_{\alpha}$ is pseudo-Anosov by \cite{thurston1998hyperbolic}. Since the monodromy $\varphi$ we started from is on the genus 2 surface, it is a simple computation using Thurston norm that all but finitely many natural numbers arise as $g_{\alpha}$ for primitive integral $\alpha \in \mathcal{R}$ (e.g. \cite[Proposition 6.10]{hironaka2011fibered}). This completes the proof.
\end{proof}


\section{Folding sequences of graph maps and (doubled) handlebodies}\label{sec:3mnfd}\

The results in this section are not completely new and follow from work of various authors. For instance, see \cite{laudenbach1974topologie}, \cite{hatcher2004homology}, \cite{luft1978actions}. 
Nevertheless, we provide a direct construction of the relevant diffeomorphisms for the sake of the readers. 

Let $G$ be a finite connected graph and $\psi: G\to G$ a homotopy equivalence. We assume that $f$ is \emph{combinatorial}, that is, $f$ maps vertices to vertices and edges to non-trivial edge-paths. The goal of this section is to provide one way of lifting a combinatorial homotopy equivalence of a graph to a diffeomorphism of a doubled handlebody and one of a handlebody. This liftings
are obviously not unique and may come from other kinds of constructions.

Denoting by $V(\cdot)$ the set of vertices, $\psi : G \to G$ gives a subdivision $G_{\Delta}$ of $G$ by setting $V(G_{\Delta})$ to be a union of $V(G)$ and the preimage of $V(G)$ under $\psi$. The graph $G_{\Delta}$ is topologically identical to $G$. Then $\psi$ is a composition $G \xrightarrow{i} G_{\Delta} \xrightarrow{\phi} G$ where $i : G \to G_{\Delta}$ is a subdivision map and $\phi : G_{\Delta} \to G$ is defined by $\phi(e) = \psi(i^{-1}(e))$ for an edge $e$ of $G_{\Delta}$. From the construction, $\phi$ is a graph map that sends an edge to an edge. By \cite{stallings1983topology}, there is a finite sequence of foldings (or, folding sequence) so that $\phi$ is a composition of those foldings. Here, folding on a graph is identifying two edges with a common endpoint. For details, see \cite{dowdall2015dynamics} and \cite{stallings1983topology}.

In this section, we explicitly construct a 3-manifold $M_G$ from $G$ and a diffeomorphism $\varphi : M_G \to M_G$ from a folding sequence of $\psi : G \to G$. More precisely, we fix a folding sequence $$G \xrightarrow{i} G_{\Delta} = G_0 \xrightarrow{\psi_0} G_1 \xrightarrow{\psi_1} \cdots \xrightarrow{\psi_n} G_{n+1} = G_{\Delta} \hookrightarrow G$$ where $\psi_i : G_i \to G_{i+1}$ is a folding, and then construct a diffeomorphism $\varphi_i : M_{G_i} \to M_{G_{i+1}}$ associated to $\psi_i$, with a canonical identification of $M_{G_{\Delta}}$ and $M_G$. As a result, composition of $\varphi_i$'s gives the desired diffeomorphism $\varphi : M_G \to M_G$. That is,
\begin{equation} \label{cd:folding}
	\begin{tikzcd}
	M_G \arrow[d, equal] \arrow[rr, dashed, "\varphi"] & & M_G \arrow[d, equal] \\
	M_{G_0} \arrow[r, "\varphi_0"] & \cdots \arrow[r, "\varphi_n"] & M_{G_{n+1}}
\end{tikzcd}
\end{equation}
Note here that a folding sequence for $\psi$ may not be unique; what we construct is a diffeomorphism $\varphi : M_G \to M_G$ respecting a fixed folding sequence for $\psi$. Throughout the paper, \emph{graph} is finite and connected.

\subsection{Doubled handlebody}
For a graph $G$, we construct a corresponding doubled handlebody $M_G$ as follows:
we first replace each vertex of $G$ with $S^3$ and each edge of $G$ with $S^2 \times I$, where $I$ is a compact interval. Then attachment of an edge to a vertex amounts to drilling out a 3-ball $D^3$ from $S^3$ and then gluing $S^2 \times I$ along one component of its boundary, as depicted in Figure \ref{fig:define3mnfd}.

\begin{figure}[h]
	\begin{tikzpicture}	[scale=0.5, every node/.style={scale=1}]
	\draw[fill=white!3, dashed] (0, 3) circle(1.5);
	\draw (-1, 4) node[left] {$S^3$};
	
	\draw[pattern = north east lines, pattern color = gray] (0, 0) circle(1);
	\draw[fill=white] (0, 0) circle(0.5);
	
	\draw (-0.5, 0) .. controls(-0.2, -0.2) and (0.2, -0.2) .. (0.5, 0);
	\draw[dashed] (-0.5, 0) .. controls (-0.2, 0.2) and (0.2, 0.2) .. (0.5, 0);	
	
	\draw (-1, 0) .. controls (-0.5, -0.5) and (0.5, -0.5) .. (1, 0);
	\draw[dashed] (-1, 0) .. controls (-0.5, 0.5) and (0.5, 0.5) .. (1, 0);
	
	\draw (-0.5, 1) node[left] {$S^2 \times I$};
	
	\end{tikzpicture}
	\qquad
	\begin{tikzpicture}[scale=0.5, every node/.style={scale=1}]
	
	\draw (0, 2.75);
	
	\draw[->, very thick] (-1, 0) -- (1, 0);
	
	\draw (0, -2.75);
	
	\end{tikzpicture}
	\qquad
	\begin{tikzpicture}[scale=0.5, every node/.style={scale=1}]
	
	\draw[fill=white!3, dashed] (0, 0) circle(2);
	
	\draw[pattern = north east lines, pattern color = gray] (0, 0) circle(1);
	\draw[fill=white] (0, 0) circle(0.5);
	
	\draw (-0.5, 0) .. controls(-0.2, -0.2) and (0.2, -0.2) .. (0.5, 0);
	\draw[dashed] (-0.5, 0) .. controls (-0.2, 0.2) and (0.2, 0.2) .. (0.5, 0);	
	
	\draw (-1, 0) .. controls (-0.5, -0.5) and (0.5, -0.5) .. (1, 0);
	\draw[dashed] (-1, 0) .. controls (-0.5, 0.5) and (0.5, 0.5) .. (1, 0);
	
	\draw (0, -2.75);
	\draw (0, 2.75);
	
	\end{tikzpicture}

	\caption{The gluing corresponding to an edge attached to a vertex.} \label{fig:define3mnfd}
\end{figure}
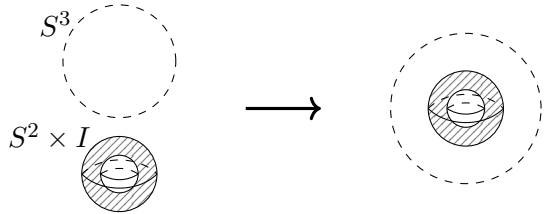 

Figure \ref{fig:twoedges} demonstrates two examples of induced 3-manifolds. Note that Figure \ref{fig:twoedgespart} will be used again in order to describe a folding map on $M_G$.
Furthermore, there is a map $\mathcal{P}: M_G\to G$ which sends the $S^3$ corresponding to each vertex to the vertex itself, and sends $S^2\times I$ to the corresponding edge by projection to the second component. It is easy to see that $\mathcal{P}$ induces an isomorphism between fundamental groups.

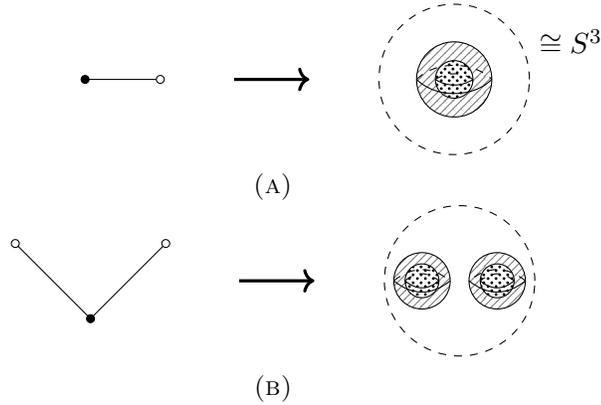
\begin{figure}[h]
	\centering
	\begin{subfigure}[b]{\textwidth}
		\centering
	\begin{tikzpicture}[scale=0.5, every node/.style={scale=1}]
	\draw (-1, 0) -- (1, 0);
	
	\filldraw (-1, 0) circle(3pt);
	\draw[fill=white] (1, 0) circle(3pt);
	
	\draw (0, -2);
	\draw (0, 2);  
	
	\draw (-5, 1);
	\end{tikzpicture}
	\qquad
	\begin{tikzpicture}[scale=0.5, every node/.style={scale=1}]
	\draw[->, very thick] (-1, 0) -- (1, 0);
	
	\draw (0, -2);
	\draw (0, 2);  
	\end{tikzpicture}
	\qquad
	\begin{tikzpicture}[scale=0.5, every node/.style={scale=1}]
	
	\draw[fill=white, dashed] (0, 0) circle(2);
	
	\draw[pattern = north east lines, pattern color = gray] (0, 0) circle(1);
	\draw[draw=white, fill=white] (0, 0) circle(0.5);
	\draw[pattern = crosshatch dots] (0, 0) circle(0.5);
	
	\draw (-0.5, 0) .. controls(-0.2, -0.2) and (0.2, -0.2) .. (0.5, 0);
	\draw[dashed] (-0.5, 0) .. controls (-0.2, 0.2) and (0.2, 0.2) .. (0.5, 0);	
	
	\draw (-1, 0) .. controls (-0.5, -0.5) and (0.5, -0.5) .. (1, 0);
	\draw[dashed] (-1, 0) .. controls (-0.5, 0.5) and (0.5, 0.5) .. (1, 0);
	
	\draw (2, 1) node[right] {$\cong S^3$};
	
	\end{tikzpicture}
	\caption{}
\end{subfigure}

	\begin{subfigure}[b]{\textwidth}
	\centering
	\begin{tikzpicture}[scale=0.5, every node/.style={scale=1}]
	
	\draw (-2, 2) -- (0, 0) -- (2, 2);
	
	\filldraw (0, 0) circle(3pt);
	\draw[fill=white] (-2, 2) circle (3pt);
	\draw[fill=white] (2, 2) circle (3pt);
	
	\draw (0, -1);
	
	\end{tikzpicture}
	\qquad
	\begin{tikzpicture}[scale=0.5, every node/.style={scale=1}]
	\draw (0, 2);
	\draw[->, very thick] (-1, 0) -- (1, 0);
	\draw (0, -2);
	\end{tikzpicture}
	\qquad
	\begin{tikzpicture}[scale=0.25, every node/.style={scale=1}]
	
	\draw[fill=white, dashed] (0, 0) circle(4);
	
	\draw[pattern=north east lines, pattern color = gray] (2, 0) circle(1.5);
	\draw[draw=white, fill=white] (2, 0) circle(0.9);
	\draw[pattern = crosshatch dots] (2, 0) circle(0.9);
	
	\draw (3.5, 0) .. controls (2.5, -0.8) and (1.5, -0.8) .. (0.5, 0);
	\draw[dashed] (3.5, 0) .. controls (2.5, 0.8) and (1.5, 0.8) .. (0.5, 0);
	
	\draw (2.9, 0) .. controls (2.3, -0.5) and (1.7, -0.5) .. (1.1, 0);
	\draw[dashed] (2.9, 0) .. controls (2.3, 0.5) and (1.7, 0.5) .. (1.1, 0);

	\draw[pattern = north east lines, pattern color = gray] (-2, 0) circle(1.5);
	\draw[draw=white, fill=white] (-2, 0) circle(0.9);
	\draw[pattern = crosshatch dots] (-2, 0) circle(0.9);
	
	\draw (-3.5, 0) .. controls (-2.5, -0.8) and (-1.5, -0.8) .. (-0.5, 0);
	\draw[dashed] (-3.5, 0) .. controls (-2.5, 0.8) and (-1.5, 0.8) .. (-0.5, 0);
	
	\draw (-2.9, 0) .. controls (-2.3, -0.5) and (-1.7, -0.5) .. (-1.1, 0);
	\draw[dashed] (-2.9, 0) .. controls (-2.3, 0.5) and (-1.7, 0.5) .. (-1.1, 0);
	
	\end{tikzpicture}
	\caption{} \label{fig:twoedgespart}
	\end{subfigure}
	
	\caption{3-manifolds obtained from the graphs. Light vertices in the graphs correspond to the dotted regions. Dark vertices in the graphs correspond to the outermost regions in the right figures. Edges of the graphs correspond the hatched regions diffeomorphic to $S^2 \times I$.} \label{fig:twoedges}
\end{figure}

For each of such foldings $\psi_i: G_i\to G_{i+1}$, let vertex $v$ of $G_i$ be the vertex on which the two edges to be folded are attached. Then the corresponding part in $M_{G_i}$ is described in Figure \ref{fig:twoedgespart}. Denote by $S$, $S_1$ and $S_2$ each part of the 3-manifold, induced from $v$ and other two vertices, respectively. In other words, $S$ is the outermost region in Figure \ref{fig:twoedgespart}, and $S_1$ and $S_2$ are dotted ones.

To get the folding on $M_{G_i}$, we extract some pieces as follows:\begin{enumerate}
	\item We drill out a solid cylinder $D^2 \times I \subseteq S^2 \times I$ from each of the two $S^2 \times I$ corresponding to an edge to be folded.
	\item Next, in $S_1$, we delete a small 3-ball $D^3$ whose boundary contains $S_1 \cap (D^2 \times \partial I)$ where $D^2 \times I$ is the cylinder removed in (1). Similarly, we drill out a small 3-ball in $S_2$.
	\item Finally we delete a cylinder $D^2 \times I$ in $S$ that connects two cylinders removed in (1). 
\end{enumerate} The union of deleted pieces is a 3-ball. As a result, we obtain $M_{G_i} \setminus D^3$ as in Figure \ref{fig:extractball}.

To ``fold" the manifold according to the folding of two edges in the graph, we make two corresponding $S^2 \times I$'s be contained in a single new $S^2 \times I$. Note that $(S^2 \times I)\setminus (D^2 \times I)$ has an annular face $\partial D^2 \times I$. Gluing two copies of them onto two opposite faces of $S^1 \times I \times I$, it results in $S^2 \times I$. In this regard, we glue $S^1 \times I \times I$ as indicated by patterns in Figure \ref{fig:solidtorus}. Then two copies of $(S^2 \times I) \setminus (D^2 \times I)$ corresponding to two edges get into a single $S^2 \times I$, representing the ``folding" of the manifold according to the folding of the edges.

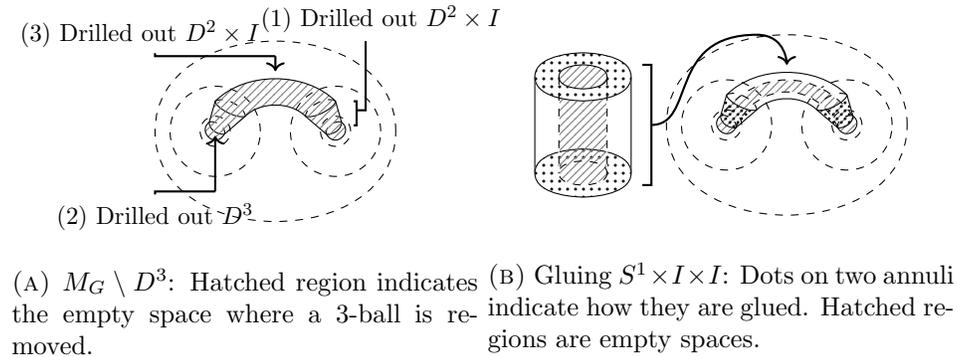
\begin{figure}[h]

\begin{subfigure}[H]{0.49\textwidth}
	\centering
	\begin{tikzpicture}[scale=0.4, every node/.style={scale=0.85}]
	
	\draw[fill=white, dashed] (4, 0) .. controls (4, 4) and (-4, 4) .. (-4, 0) .. controls (-4, -4) and (4, -4) .. (4, 0);
	
	\draw[fill=white!15, dashed] (2, 0) circle(1.5);
	\draw[fill=white!8, dashed] (2, 0) circle(0.5);
	
	\draw[fill=white!30, dashed] (2, 1) .. controls (1.7, 1.2) and (1, 0.8) .. (1, 0.5);
	\draw[fill=white!30] (1, 0.5) .. controls (1.3, 0.4) and (2, 0.8) .. (2, 1);
	
	\draw[fill=white!20, dashed] (2.2, 0.3) .. controls (2, 0.4) and (1.6,0.1) .. (1.7, -0.1);
	\draw[fill=white!20] (1.7, -0.1) .. controls (2, -0.1) and (2.2, 0.1) .. (2.2, 0.3);
	
	
	
	\draw[fill=white!15, dashed] (-2, 0) circle(1.5);
	\draw[fill=white!8, dashed] (-2, 0) circle(0.5);
	
	\draw[fill=white!30, dashed] (-2, 1) .. controls (-1.7, 1.2) and (-1, 0.8) .. (-1, 0.5);
	\draw[fill=white!30] (-1, 0.5) .. controls (-1.3, 0.4) and (-2, 0.8) .. (-2, 1);
	
	\draw[fill=white!20, dashed] (-2.2, 0.3) .. controls (-2, 0.4) and (-1.6,0.1) .. (-1.7, -0.1);
	\draw[fill=white!20] (-1.7, -0.1) .. controls (-2, -0.1) and (-2.2, 0.1) .. (-2.2, 0.3);
	
	
	
	
	\draw[pattern = north east lines, pattern color = gray] (-1.7, -0.1) .. controls (-2, -0.6) and (-2.6, 0) .. (-2.2, 0.3) -- (-2, 1) .. controls (-1, 2) and (1, 2) ..  (2, 1) -- (2.2, 0.3) .. controls (2.6, 0) and (2, -0.6) .. (1.7, -0.1) -- (1, 0.5) .. controls (0.5, 1) and (-0.5, 1) .. (-1, 0.5) -- (-1.7, -0.1);

	
	\draw[thick] (-4, 2.6) -- (-4, 2.5) -- (0, 2.5);
	\draw[thick, ->] (0, 2.5) -- (0, 2);
	
	\draw (-4.5, 2.5) node[above] {(3) Drilled out $D^2 \times I$};
	
	\draw[thick] (3, 3) -- (3, 0.7) -- (2.7, 0.7);
	\draw[thick] (2.6, 1) -- (2.7, 1) -- (2.7, 0.2) -- (2.6, 0.2);
	
	\draw (3.5, 3) node[above] {(1) Drilled out $D^2 \times I$};
	
	\draw[thick] (-4, -2.1) -- (-4, -2) -- (-2, -2);
	\draw[thick, ->] (-2, -2) -- (-2, -0.15);
	
	\draw (-4, -2.1) node[below] {(2) Drilled out $D^3$};
	
	\end{tikzpicture}

	\caption{$M_{G_i} \setminus D^3$: Hatched region indicates the empty space where a 3-ball is removed.} \label{fig:extractball}
\end{subfigure}
\begin{subfigure}[H]{0.49\textwidth}
	\centering
	\begin{tikzpicture}[scale=0.32, every node/.style={scale=1}]
	
	\draw (0, 5);
	\draw (0, -5);
	
	\draw[fill=white!20, draw=white] (-2, 2) .. controls (-2, 1) and (0, 1) .. (0, 1) .. controls (0, 1) and (2, 1) .. (2, 2) -- (2, -2) .. controls (2, -1) and (0, -1) .. (0, -1) .. controls (0, -1) and (-2, -1) .. (-2, -2) -- (-2, 2);

	\draw (2, 2) -- (2, -2);
	\draw (-2, 2) -- (-2, -2);

	\draw[draw = white, fill=white!15] (2, -2) .. controls (2, -3) and (0, -3) .. (0, -3) .. controls (0, -3) and (-2, -3) .. (-2, -2);
	\draw[draw = white, fill=white!15, dashed] (-2, -2) .. controls (-2, -1) and (0, -1) .. (0, -1) .. controls (0, -1) and (2, -1) .. (2, -2);
	\draw[pattern = dots] (2, -2) .. controls (2, -3) and (0, -3) .. (0, -3) .. controls (0, -3) and (-2, -3) .. (-2, -2).. controls (-2, -1) and (0, -1) .. (0, -1) .. controls (0, -1) and (2, -1) .. (2, -2);

	\draw[fill=white!10, draw = white] (1, -2) .. controls (1, -2.5) and (0, -2.5) .. (0, -2.5) .. controls (0, -2.5) and (-1, -2.5) .. (-1, -2) .. controls (-1, -1.5) and (0, -1.5) .. (0, -1.5) .. controls (0, -1.5) and (1, -1.5) .. (1, -2);

	\draw[pattern = north east lines, pattern color = gray, draw = white!20] (1, 2) .. controls (1, 2.5) and (0, 2.5) .. (0, 2.5) .. controls (0, 2.5) and (-1, 2.5) .. (-1, 2) -- (-1, -2)  .. controls (-1, -2.5) and (0, -2.5) .. (0, -2.5) .. controls (0, -2.5) and (1, -2.5) .. (1, -2) -- (1, 2);
	
	\draw[fill=white!20, draw=white] (2, 2) .. controls (2, 3) and (0, 3) .. (0, 3) .. controls (0, 3) and (-2, 3) .. (-2, 2) .. controls (-2, 1) and (0, 1) .. (0, 1) .. controls (0, 1) and (2, 1) .. (2, 2);
	\draw[pattern = dots] (2, 2) .. controls (2, 3) and (0, 3) .. (0, 3) .. controls (0, 3) and (-2, 3) .. (-2, 2) .. controls (-2, 1) and (0, 1) .. (0, 1) .. controls (0, 1) and (2, 1) .. (2, 2);
	
	\draw[fill=white!20, draw = white] (1, 2) .. controls (1, 2.5) and (0, 2.5) .. (0, 2.5) .. controls (0, 2.5) and (-1, 2.5) .. (-1, 2) .. controls (-1, 1.5) and (0, 1.5) .. (0, 1.5) .. controls (0, 1.5) and (1, 1.5) .. (1, 2);
	\draw[pattern = north east lines, pattern color = gray] (1, 2) .. controls (1, 2.5) and (0, 2.5) .. (0, 2.5) .. controls (0, 2.5) and (-1, 2.5) .. (-1, 2) .. controls (-1, 1.5) and (0, 1.5) .. (0, 1.5) .. controls (0, 1.5) and (1, 1.5) .. (1, 2);
	
		\draw[dashed] (1, -2) .. controls (1, -2.5) and (0, -2.5) .. (0, -2.5) .. controls (0, -2.5) and (-1, -2.5) .. (-1, -2) .. controls (-1, -1.5) and (0, -1.5) .. (0, -1.5) .. controls (0, -1.5) and (1, -1.5) .. (1, -2);

	\draw[dashed] (1, 2) -- (1, -2);
	\draw[dashed] (-1, 2) -- (-1, -2);
	\end{tikzpicture}
	\begin{tikzpicture}[scale=0.4, every node/.style={scale=1}]
	
	\draw[fill=white!3, dashed] (4, 0) .. controls (4, 4) and (-4, 4) .. (-4, 0) .. controls (-4, -4) and (4, -4) .. (4, 0);

	\draw[thick] (-4.8, 2) -- (-4.5, 2) -- (-4.5, -2) -- (-4.8, -2);
	
	\draw[thick, ->] (-4.5, 0) .. controls (-3, 0) and (-4, 3) .. (-1, 3) .. controls (0, 3) and (0, 2) .. (0, 2);

	\draw[fill=white!15, draw=white] (2, 0) circle(1.5);
	\draw[fill=white!8, draw = white] (2, 0) circle(0.5);
	
	\draw[fill=white!15, draw=white] (-2, 0) circle(1.5);
	
	\draw[fill=white!20, draw=white] (-1, 0.5) .. controls (-0.5, 1) and (0.5, 1) .. (1, 0.5) .. controls (1.3, 0.4) and (2, 0.8) .. (2, 1) .. controls (1, 2) and (-1, 2) .. (-2, 1) .. controls (-2, 0.8) and (-1.3, 0.4) .. (-1, 0.5);
	
	\draw[dashed] (2, 0) circle(1.5);

	\draw[fill=white!20, draw=white] (1.7, -0.1) .. controls (2, -0.1) and (2.2, 0.1) .. (2.2, 0.3) -- (2, 1) .. controls (2, 0.8) and (1.3, 0.4) .. (1, 0.5) -- (1.7, -0.1);
	
	\draw[dashed] (2, 0) circle(0.5);

	\draw[dashed] (-2, 0) circle(1.5);
	\draw[fill=white!8, draw=white] (-2, 0) circle(0.5);
	
	\draw[fill=white!20, draw=white] (-1.7, -0.1) .. controls (-2, -0.1) and (-2.2, 0.1) .. (-2.2, 0.3) -- (-2, 1) .. controls (-2, 0.8) and (-1.3, 0.4) .. (-1, 0.5) -- (-1.7, -0.1);
	
	\draw[draw = white, pattern = north east lines, pattern color = gray] (-1.7, -0.1) .. controls (-2, -0.6) and (-2.6, 0) .. (-2.2, 0.3) .. controls (-1, 1.9) and (1, 1.9) .. (2.2, 0.3) .. controls (2.6, 0) and (2, -0.6) .. (1.7, -0.1) .. controls (1, 1.5) and (-1, 1.5) .. (-1.7, -0.1);

	\draw[pattern = crosshatch dots, draw=white] (-1.7, -0.1) .. controls (-2, -0.1) and (-2.2, 0.1) .. (-2.2, 0.3) -- (-2, 1) .. controls (-2, 0.8) and (-1.3, 0.4) .. (-1, 0.5) -- (-1.7, -0.1);
	\draw[pattern = crosshatch dots, draw=white] (1.7, -0.1) .. controls (2, -0.1) and (2.2, 0.1) .. (2.2, 0.3) -- (2, 1) .. controls (2, 0.8) and (1.3, 0.4) .. (1, 0.5) -- (1.7, -0.1);

		\draw[dashed] (2, 1) .. controls (1.7, 1.2) and (1, 0.8) .. (1, 0.5);
	\draw (1, 0.5) .. controls (1.3, 0.4) and (2, 0.8) .. (2, 1);
	
	\draw[dashed] (2.2, 0.3) .. controls (2, 0.4) and (1.6,0.1) .. (1.7, -0.1);
	\draw (1.7, -0.1) .. controls (2, -0.1) and (2.2, 0.1) .. (2.2, 0.3);
	
		\draw (1, 0.5) -- (1.7, -0.1);
	\draw (2, 1) -- (2.2, 0.3);

	\draw[dashed] (-2, 0) circle(0.5);
	
	\draw[dashed] (-2, 1) .. controls (-1.7, 1.2) and (-1, 0.8) .. (-1, 0.5);
	\draw (-1, 0.5) .. controls (-1.3, 0.4) and (-2, 0.8) .. (-2, 1);
	
	\draw[dashed] (-2.2, 0.3) .. controls (-2, 0.4) and (-1.6,0.1) .. (-1.7, -0.1);
	\draw (-1.7, -0.1) .. controls (-2, -0.1) and (-2.2, 0.1) .. (-2.2, 0.3);
	
	\draw (-1, 0.5) -- (-1.7, -0.1);
	\draw (-2, 1) -- (-2.2, 0.3);

			\draw (1.7, -0.1) .. controls (2, -0.6) and (2.6, 0) .. (2.2, 0.3);
	
	\draw (-1.7, -0.1) .. controls (-2, -0.6) and (-2.6, 0) .. (-2.2, 0.3);
	
	\draw (-1, 0.5) .. controls (-0.5, 1) and (0.5, 1) .. (1, 0.5);
	\draw (-2, 1) .. controls (-1, 2) and (1, 2) ..  (2, 1);
	
	\draw[dashed] (2.2, 0.3) .. controls (1, 1.9) and (-1, 1.9) .. (-2.2, 0.3);
	\draw[dashed] (1.7, -0.1) .. controls (1, 1.5) and (-1, 1.5) .. (-1.7, -0.1);

	\end{tikzpicture}

	\caption{Gluing $S^1 \times I \times I$: Dots on two annuli indicate how they are glued. Hatched regions are empty spaces.} \label{fig:solidtorus}
\end{subfigure}
\caption{Folding of $M_{G_i}$}
\end{figure}

So far, we have seen how we ``fold" the manifold by gluing $S^2 \times I \times I$. After gluing as in Figure \ref{fig:solidtorus}, the remaining boundary is $S^2$: one annular face of $S^2 \times I \times I$ not glued and two 2-disks on the boundary of removed 3-balls in (3) of Figure \ref{fig:extractball}. See figure \ref{fig:remboundary}. 

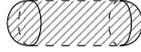
\begin{figure}[h]
	\begin{tikzpicture}[scale=0.3, every node/.style={scale=1}, rotate=90]

	\draw[fill=white!20, draw=white] (-1, 2) .. controls (-1, 1.5) and (0, 1.5) .. (0, 1.5) .. controls (0, 1.5) and (1, 1.5) .. (1, 2) -- (1, -2) .. controls (1, -1.5) and (0, -1.5) .. (0, -1.5) .. controls (0, -1.5) and (-1, -1.5) .. (-1, -2) -- (-1, 2);

	\draw[fill=white!8, draw=white] (1, 2) .. controls (1, 3) and (0, 3) .. (0, 3) .. controls (0, 3) and (-1, 3) .. (-1, 2) .. controls (-1, 1.5) and (0, 1.5) .. (0, 1.5) .. controls (0, 1.5) and (1, 1.5) .. (1, 2);

	\draw[fill=white!8, dashed] (1, -2) .. controls (1, -3) and (0, -3) .. (0, -3) .. controls (0, -3) and (-1, -3) .. (-1, -2) ;
	
	\draw[dashed] (1, 2) .. controls (1, 2.5) and (0, 2.5) .. (0, 2.5) .. controls (0, 2.5) and (-1, 2.5) .. (-1, 2);
	
	\draw[fill=white!10, draw=white] (1, -2) .. controls (1, -2.5) and (0, -2.5) .. (0, -2.5) .. controls (0, -2.5) and (-1, -2.5) .. (-1, -2) .. controls (-1, -1.5) and (0, -1.5) .. (0, -1.5) .. controls (0, -1.5) and (1, -1.5) .. (1, -2);
	
	\draw[dashed] (-1, -2) .. controls (-1, -1.5) and (0, -1.5) .. (0, -1.5) .. controls (0, -1.5) and (1, -1.5) .. (1, -2);
	
	\draw[fill=white!8, draw = white] (1, -2) .. controls (1, -3) and (0, -3) .. (0, -3) .. controls (0, -3) and (-1, -3) .. (-1, -2) .. controls (-1, -2.5) and (0, -2.5) .. (0, -2.5) .. controls (0, -2.5) and (1, -2.5) .. (1, -2);
	
	\draw[draw = white, pattern = north east lines, pattern color = gray] (1, 2) .. controls (1, 3) and (0, 3) .. (0, 3) .. controls (0, 3) and (-1, 3) .. (-1, 2) -- (-1, -2) .. controls (-1, -3) and (0, -3) .. (0, -3) .. controls (0, -3) and (1, -3) .. (1, -2) -- (1, 2);
	
	\draw(1, 2) .. controls (1, 3) and (0, 3) .. (0, 3) .. controls (0, 3) and (-1, 3) .. (-1, 2) .. controls (-1, 1.5) and (0, 1.5) .. (0, 1.5) .. controls (0, 1.5) and (1, 1.5) .. (1, 2);
	
	\draw (1, -2) .. controls (1, -3) and (0, -3) .. (0, -3) .. controls (0, -3) and (-1, -3) .. (-1, -2) .. controls (-1, -2.5) and (0, -2.5) .. (0, -2.5) .. controls (0, -2.5) and (1, -2.5) .. (1, -2);
	
	\draw[dashed] (1, 2) -- (1, -2);
	\draw[dashed] (-1, 2) -- (-1, -2);
	
	\end{tikzpicture}

	\caption{Empty (hatched) region in Figure \ref{fig:solidtorus}} \label{fig:remboundary}
\end{figure} 

Hence, we can glue a 3-ball along this boundary diffeomorphic to $S^2$. Gluing the 3-ball in this way represents the identifying endpoints of two folded edges (Figure \ref{fig:define3mnfd}). This whole procedure defines a map on $M_{G_i}$ corresponding to folding two edges on the graph. Moreover, adding the solid torus as in Figure \ref{fig:solidtorus} and then gluing a 3-ball along the boundary in Figure \ref{fig:remboundary} is just an adding a 3-ball to one in Figure \ref{fig:extractball}. Consequently, we get a diffeomorphism $\varphi_i : M_{G_i} \to M_{G_{i+1}}$ by folding two $S^2 \times I$ into one $S^2 \times I$.

Now as in \eqref{cd:folding}, we define $\varphi : M_G \to M_G$ to be a composition of maps $\varphi_i : M_{G_i} \to M_{G_{i+1}}$. Since each folding map $\varphi_{i} : M_{G_i} \to M_{G_{i+1}}$ is a diffeomorphism, $\varphi : M_G \to M_G$ is the desired diffeomorphism respecting the folding sequence of $\psi : G \to G$. 
As a consequence, we now have:

\begin{prop}\label{lem:dhand}
Let $G$ be a finite connected graph and $\psi: G\to G$ be a combinatorial homotopy equivalence. There is a doubled handlebody $M_G$, a map $\mathcal{P}: M_G\to G$ which induces an isomorphism on fundamental groups, and a diffeomorphism $\varphi: M_G \to M_G$ such that $\mathcal{P}\circ \varphi$ and $\psi\circ \mathcal{P}$ induces the same map on $\pi_1$.
\end{prop}

Note that the choice of diffeomorphism $\varphi$ is defined up to isotopy. In each of discussion in the next section, for a folding sequence of $\psi : G \to G$, we make a choice of specific \emph{diffeomorphism} $\varphi : M_G \to M_G$ constructed in the above way to consider its generalized fibered cone.

\subsection{Handlebody}

We can also carry out the construction above with ``half'' of every pieces.  More precisely, for a graph $G$, we get pieces by assigning each vertex to a 3-ball and each edge to a solid cylinder $D^2 \times I$ where $I = [0, 1]$. Then for each $D^2 \times I$, we attach $D^2 \times \{0\}$ to the ball assigned to the corresponding endpoint and similarly attach $D^2 \times \{1\}$ to the ball corresponding to the other endpoint. Now we get the handlebody $H_G$ with genus $\rk \pi_1(G)$. See Figure \ref{fig:handlebody}.

\begin{figure}[h]
	\centering
	\begin{tikzpicture}[scale=0.3, every node/.style={scale=1}]
	
	\draw(-14, 0) -- (-18, 0) -- (-22, 0);
	\filldraw (-14, 0) circle(6pt);
	\filldraw (-18, 0) circle(6pt);
	\filldraw (-22, 0) circle(6pt);
	
	\draw[fill=white] (-8, 0) circle(3);
	\draw[fill=white] (-2, 2) -- (-6, 2) .. controls (-7, 1) and (-7, -1) .. (-6, -2) -- (-2, -2);
	\draw[dashed] (-6, 2) .. controls (-5, 1) and (-5, -1) .. (-6, -2);
	
	\draw[fill=white] (0, 0) circle(3);
	\draw[fill=white] (6, 2) -- (2, 2) .. controls (1, 1) and (1, -1) .. (2, -2) -- (6, -2);
	\draw[dashed] (2, 2) .. controls (3, 1) and (3, -1) .. (2, -2);
	\draw[fill=white] (8, 0) circle(3);
	
	\draw (0, 0) node {$D^3$};
	\draw (0, 3) node[above] {$D^2 \times I$};
	\draw[->] (-2, 4) .. controls (-4, 4) .. (-4, 2.3);
	
	\end{tikzpicture}
	
	\caption{Attachment of 3-balls and solid cylinders according to a graph} \label{fig:handlebody}
\end{figure}
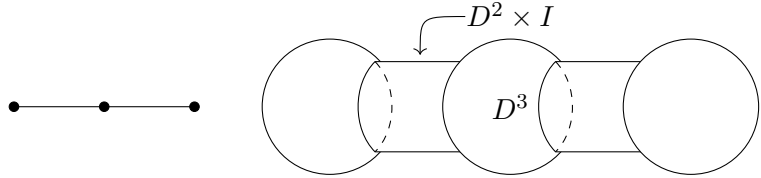

Now it remains to get a diffeomorphism corresponding to a folding on $G$ (or $G_{\Delta}$). To get the folding on the handlebody, we just proceed as in the previous subsection with half of the pieces. First, we remove $\mbox{(half-disk)}\times I$ from each $D^2 \times I$, assigned to an edge supposed to be folded. Here, we take the half-disk in $D^2$ so that the diameter of the half-disk is contained in $\partial D^2$. We similarly eliminate $\mbox{(half-disk)}\times I$ from the 3-ball corresponding to the common endpoint of the edges to be folded. We also remove a small half-ball from each of two balls corresponding to vertices supposed to be identified via the folding on $G$ (or $G_{\Delta}$). See Figure \ref{fig:foldingonhandlebody}.

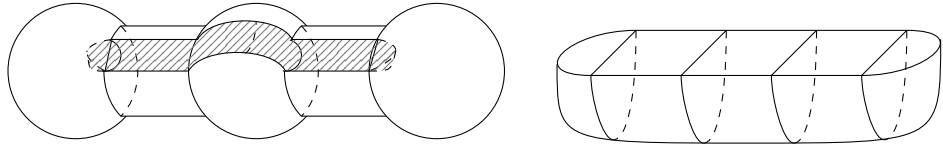
\begin{figure}[h]
	\centering
	\begin{subfigure}[h]{0.54\textwidth}
	\centering
	\begin{tikzpicture}[scale=0.3, every node/.style={scale=1}]
	
	\draw[fill=white] (-8, 0) circle(3);
	\draw[fill=white] (-2, 2) -- (-6, 2) .. controls (-7, 1) and (-7, -1) .. (-6, -2) -- (-2, -2);
	\draw[dashed] (-6, 2) .. controls (-5, 1) and (-5, -1) .. (-6, -2);
	
	\draw[fill=white] (0, 0) circle(3);
	\draw[fill=white] (6, 2) -- (2, 2) .. controls (1, 1) and (1, -1) .. (2, -2) -- (6, -2);
	\draw[dashed] (2, 2) .. controls (3, 1) and (3, -1) .. (2, -2);
	\draw[fill=white] (8, 0) circle(3);

	\draw[fill=white] (-6.7, 0.7) circle(20pt);
	\draw[fill=white] (1.3, 0.7) circle(20pt);
	
	\draw[fill=white, draw=white] (1.9-8, 2) .. controls (0.9-8, 1) and (0.9-8,-1) .. (1.9-8, -2) -- (0-8, -2) -- (0-8, 2) -- (1.9-8, 2);
	\draw[fill=white, draw=white] (1.9, 2) .. controls (0.9, 1) and (0.9,-1) .. (1.9, -2) -- (0, -2) -- (0, 2) -- (1.9, 2);
	
	\draw[dashed] (-0.5, 0.8) .. controls (-0.3, 0.8) and (0, 1.5) .. (0, 2.2);
	\draw[pattern = north east lines, pattern color = gray] (1.6-8, 1.4) -- (5.4-8, 1.4) .. controls (-2, 2.5) and (1, 2.5) .. (1.6, 1.4) -- (5.4, 1.4) -- (5, 0) -- (1.3, 0) .. controls (0.7 , 1.1) and (-2.4, 1.1) .. (5-8, 0) -- (1.3-8, 0) -- (1.6-8, 1.4);

	\draw[dashed, pattern = north east lines, pattern color = gray] (5.4, 1.4) .. controls (6.5, 1.2) and (6.5, 0.6) .. (5, 0);
	
	\draw[dashed, pattern = north east lines, pattern color = gray] (5, 0) .. controls (6, 0) .. (6.2, 1);
	
	\draw[dashed, pattern = north east lines, pattern color = gray] (1.6-8, 1.4) .. controls (1.6-8-1.4, 1.2) and (1.6-8-1.4, 0.6) .. (1.3-8, 0);
	
	\draw[dashed, pattern = north east lines, pattern color = gray] (1.3-8, 0) .. controls (1.3-8-0.7, 0) .. (1.6-8-1.1, 0.9);

	\end{tikzpicture}
	
	\caption{Hatched region is where the removed pieces were attached.}
\end{subfigure}
\begin{subfigure}{0.45\textwidth}
	\centering
	\begin{tikzpicture}[scale=0.3, every node/.style={scale=1}]
	
	\draw (-5, 1) -- (7, 1) .. controls (10, 1) and (8, -1) .. (5, -1) -- (-7, -1) .. controls (-10, -1) and (-8, 1) .. (-5, 1);
	\draw (7, 1) -- (5, -1);
	\draw (3, 1) -- (1, -1);
	\draw (-1, 1) -- (-3, -1);
	\draw (-5, 1) -- (-7, -1);
	
	\draw (8.5, 0.6) .. controls (8.5, -4) .. (0, -4) .. controls (-8.5, -4) .. (-8.5, -0.5);
	
	\draw (-7, -1) .. controls (-7, -2) and (-6.5, -3.85) .. (-6, -3.85);
	\draw[dashed] (-5, 1) .. controls (-5, -2.5) and (-5.5, -3.85)  .. (-6, -3.85);
	
	\draw (-7+4, -1) .. controls (-7+4, -2) and (-6.5+4, -4) .. (-6+4, -4);
	\draw[dashed] (-5+4, 1) .. controls (-5+4, -2.5) and (-5.5+4, -4)  .. (-6+4, -4);
		
	\draw (-7+4+4, -1) .. controls (-7+4+4, -2) and (-6.5+4+4, -4) .. (-6+4+4, -4);
	\draw[dashed] (-5+4+4, 1) .. controls (-5+4+4, -2.5) and (-5.5+4+4, -4)  .. (-6+4+4, -4);
		
	\draw (-7+4+4+4, -1) .. controls (-7+4+4+4, -2) and (-6.5+4+4+4, -3.85) .. (-6+4+4+4, -3.85);
	\draw[dashed] (-5+12, 1) .. controls (-5+12, -2.5) and (-5.5+12, -3.85)  .. (-6+12, -3.85);
	
	\end{tikzpicture}
	
	\caption{Removed $\mbox{(half-disk)} \times I$}
\end{subfigure}

	\caption{Drilling out from 3-balls and solid cylinders} \label{fig:foldingonhandlebody}
\end{figure}

Then gluing $\mbox{(half of }S^1\mbox{)} \times I \times I$ along some of its faces onto the removed regions, we finally get a handlebody removed a half-ball. See Figure \ref{fig:handlebodyremovedhalfball}.

\begin{figure}[H]
	\centering
	\begin{tikzpicture}[scale=0.3, every node/.style={scale=0.8}]
	
	\draw[pattern = north east lines, pattern color = gray!50] (8.5, 0.6) .. controls (8.5, -4) .. (0, -4) .. controls (-8.5, -4) .. (-8.5, -0.5);
	\draw[pattern = north east lines, pattern color = gray!50] (-5, 1) -- (7, 1) .. controls (10, 1) and (8, -1) .. (5, -1) -- (-7, -1) .. controls (-10, -1) and (-8, 1) .. (-5, 1);
	\draw (7, 1) -- (5, -1);
	\draw (3, 1) -- (1, -1);
	\draw (-1, 1) -- (-3, -1);
	\draw (-5, 1) -- (-7, -1);

	\draw (-7, -1) .. controls (-7, -2) and (-6.5, -3.85) .. (-6, -3.85);
	\draw[dashed] (-5, 1) .. controls (-5, -2.5) and (-5.5, -3.85)  .. (-6, -3.85);
	
	\draw[fill=white] (-0.5-4, -2.5) circle(0.5);
	\draw (-0.5-4, -2.5) node {$1$};
	
	\draw (-7+4, -1) .. controls (-7+4, -2) and (-6.5+4, -4) .. (-6+4, -4);
	\draw[dashed] (-5+4, 1) .. controls (-5+4, -2.5) and (-5.5+4, -4)  .. (-6+4, -4);
	
	\draw[fill=white] (-0.5, -2.5) circle(0.5);
	\draw (-0.5, -2.5) node {$2$};
	
	\draw (-7+4+4, -1) .. controls (-7+4+4, -2) and (-6.5+4+4, -4) .. (-6+4+4, -4);
	\draw[dashed] (-5+4+4, 1) .. controls (-5+4+4, -2.5) and (-5.5+4+4, -4)  .. (-6+4+4, -4);
	
	\draw[fill=white] (-0.5+4, -2.5) circle(0.5);
	\draw (-0.5+4, -2.5) node {$3$};
	
	\draw (-7+4+4+4, -1) .. controls (-7+4+4+4, -2) and (-6.5+4+4+4, -3.85) .. (-6+4+4+4, -3.85);
	\draw[dashed] (-5+12, 1) .. controls (-5+12, -2.5) and (-5.5+12, -3.85)  .. (-6+12, -3.85);
	

	\draw (-3+18, -1) .. controls (-3+18, -2) and (-2.5+18, -4) .. (-2+18, -4);
	\draw[dashed] (-1+18, 1) .. controls (-1+18, -2.5) and (-1.5+18, -4)  .. (-2+18, -4);
	
	\draw (1+18, -1) .. controls (1+18, -2) and (1.5+18, -4) .. (2+18, -4);
	\draw (3+18, 1) .. controls (3+18, -2.5) and (2.5+18, -4)  .. (2+18, -4);
	
	\draw (16, -4) -- (20, -4);
	
	\draw (19.5, -0.5) .. controls (19.5, -1.5) and (20.5, -1) .. (20.5, 0.5) -- (21, 1) -- (17, 1) -- (16.5, 0.5) .. controls  (16.5, -1) and (15.5, -1.5) .. (15.5, -0.5);
	
	\draw[fill=white] (19.5, -0.5) -- (15.5, -0.5) -- (15, -1) -- (19, -1) -- (19.5, -0.5);
	\draw (20.5, 0.5) -- (16.5, 0.5);
	
	\draw[fill=white] (17.5, -2.5) circle(0.5);
	\draw (17.5, -2.5) node {$2$};

	\draw[fill=white] (20, -2.4) circle(0.5);
	\draw (20, -2.4) node {$3$};
	
	\draw[fill=white] (14, 0) circle(0.5);
	\draw (14, 0) node {$1$};
	\draw[->] (14, -0.75) .. controls (14, -2.5) .. (15, -2.5);
	
	\draw (21, -1) node[right] {$= \mbox{(half of }S^1\mbox{)}\times I \times I$};
		
	\end{tikzpicture}
	
	\caption{Attaching $\mbox{(half of }S^1\mbox{)} \times I \times I$ to the removed region. The hatched region is empty due to the previous elimination, and the numbers indicate the way of gluing. Lines just stand for marking which part it comes from in Figure \ref{fig:foldingonhandlebody}, so there is no membrane or wall within the hatched region.} \label{fig:handlebodyremovedhalfball}
\end{figure}
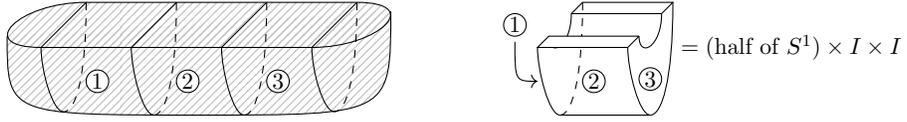

Gluing the half-ball as in Figure \ref{fig:regluinghalfball} again, we finally get the desired folding on the handlebody, and it is indeed a diffeomorphism.

\begin{figure}[h]
	\centering
	\begin{tikzpicture}[scale=0.3, every node/.style={scale=1}]
	
	\draw[pattern = north east lines, pattern color = gray!50] (8.5, 0.6) .. controls (8.5, -4) .. (0, -4) .. controls (-8.5, -4) .. (-8.5, -0.5);
	\draw[draw = white, pattern = north east lines, pattern color = gray!50] (-5, 1) -- (7, 1) .. controls (10, 1) and (8, -1) .. (5, -1) -- (-7, -1) .. controls (-10, -1) and (-8, 1) .. (-5, 1);
	
	\draw (7, 1) -- (5, -1);
	\draw (-5, 1) -- (-7, -1);
	
	\draw[draw=white, fill=white] (-6, -3.9) -- (-6, -4.1) -- (6, -4.1) -- (6, -3.9)-- (0, -3) -- (-6, -3.9);
	
	\draw[fill=white] (-7, -1) .. controls (-6, 0) and (4, 0) .. (5, -1) .. controls (-7+4+4+4, -2) and (-6.5+4+4+4, -3.85) .. (-6+4+4+4, -3.85) -- (-6, -3.85) .. controls (-6.5, -3.85) and (-7, -2) .. (-7, -1);
	
	\draw[fill=white] (-5, 1) .. controls (-4, 0) and (6, 0) .. (7, 1);
	
	\draw (-5, 1) -- (7, 1) .. controls (10, 1) and (8, -1) .. (5, -1) -- (-7, -1) .. controls (-10, -1) and (-8, 1) .. (-5, 1);

	\draw (-7, -1) .. controls (-7, -2) and (-6.5, -3.85) .. (-6, -3.85);
	\draw[dashed] (-5, 1) .. controls (-5, -2.5) and (-5.5, -3.85)  .. (-6, -3.85);
	
	\draw (-7+4+4+4, -1) .. controls (-7+4+4+4, -2) and (-6.5+4+4+4, -3.85) .. (-6+4+4+4, -3.85);
	\draw[dashed] (-5+12, 1) .. controls (-5+12, -2.5) and (-5.5+12, -3.85)  .. (-6+12, -3.85);
	
	\draw (-7+4, -1) .. controls (-7+4, -2) and (-6.5+4, -3.85) .. (-6+4, -3.85);
	\draw[dashed] (-5+4, 1) .. controls (-5+4, -2.5) and (-5.5+4, -3.85)  .. (-6+4, -3.85);

	\draw (-7+4+4, -1) .. controls (-7+4+4, -2) and (-6.5+4+4, -3.85) .. (-6+4+4, -3.85);
	\draw[dashed] (-5+4+4, 1) .. controls (-5+4+4, -2.5) and (-5.5+4+4, -3.85)  .. (-6+4+4, -3.85);

	\draw[dashed] (1.7, -0.3) .. controls (1.7, -2.3) and (2.3, -1.7) .. (2.3, 0.3);		\draw[dashed] (1.7-4, -0.3) .. controls (1.7-4, -2.3) and (2.3-4, -1.7) .. (2.3-4, 0.3);
	\draw[dashed] (-6, -3.85) .. controls (-3, -0.5) and (3, -0.5) .. (6, -3.85);
	
	\draw[pattern = north east lines, pattern color = gray!50] (17+3, -4) .. controls (16+3, -2) and (16+3, 0) .. (17+3, 2) arc(90:270:3);
	\draw (17+3, 2) .. controls (18+3, 0) and (18+3, -2) .. (17+3, -4);
	
	\draw[->] (17, 1.5) .. controls (17, 3.5) and (6, 3.5) .. (6, 1.5);

	\end{tikzpicture}
	
	\caption{Regluing the half-ball. Left one is a result of the gluing in Figure \ref{fig:handlebodyremovedhalfball}. The hatched regions are 2-dimensional faces that two pieces are glued.} \label{fig:regluinghalfball}
\end{figure}
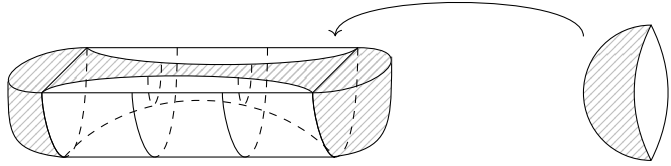

Hence, as a consequence, we have:

\begin{prop}\label{lem:hand}
Let $G$ be a finite connected graph and $\psi: G\to G$ be a combinatorial homotopy equivalence. There is a handlebody $H_G$, a map $\mathcal{P}: H_G\to G$ which induces an isomorphism on fundamental groups, and a diffeomorphism $\varphi: H_G \to H_G$ such that $\mathcal{P}\circ \varphi$ and $\psi\circ \mathcal{P}$ induces the same map on $\pi_1$.

\end{prop}


\section{Free-splitting complexes and free-factor complexes} \label{sec:estimate}

An immediate consequence of Theorem \ref{theorem:mainthm} and Proposition  \ref{lem:dhand} is the following:

\begin{cor} \label{thm:graph}
	Let $G$ be a finite connected graph and  $\psi: G \to G$ be a combinatorial homotopy equivalence. Let $M_G$ be the doubled handlebody and $\varphi: M_G\to M_G$ the induced diffeomorphism in Proposition \ref{lem:dhand}. Let $N$ be the mapping torus of $\varphi$. Let $\mathcal{R}$ be an intersection of a $d+1$-dimensional rational subspace and a proper subcone of the generalized fibered cone for $\varphi$. Then
	\[l(\varphi_{\alpha}) \lesssim \lVert\alpha\rVert^{-1-1/d}\]
	for all primitive integral element $\alpha \in \mathcal{R}$, where $\varphi_{\alpha}$ is the monodromy of the fibration $N \to S^1$ corresponding to $\alpha$, $l(\varphi_{\alpha})$ is the asymptotic translation length on the sphere complex of the fiber $M_{\alpha}$, and $\lVert\cdot\rVert$ be any norm on $H^1(N)$.
\end{cor}

\subsection{Positive cone} \label{subsec:positivecone}

We can obtain more results when we look at certain subcones of the generalized fibered cone. 

\begin{definition}[Train track map] \label{def:traintrackmap}
	Let $G$ be a finite connected graph, and consider a combinatorial homotopy equivalence $\psi:G \to G$.
	The map $\psi : G \to G$ is called a \emph{train track map} if for each edge $e$ and $n \ge 1$ the restriction $\psi^n|_e$ of $\psi^n$ to $e$ is an immersion, i.e. no back-tracking condition holds.
	\begin{itemize}
	
	\item A train track map is \emph{irreducible} if its transition matrix is irreducible.
	
	\item A train track map $\psi$ is said to be \emph{expanding} if the length of $\psi^n(e)$ diverges as $n \to \infty$ for each edge $e$.
	\end{itemize}
\end{definition}

\begin{remark}
	As above, some literature defines the train track map to be a homotopy equivalence. For instance, see \cite[Definition 2.11]{dowdall2015dynamics}. In contrast, the train track map has also been defined as a map that is not necessarily a homotopy equivalence. For example, see \cite[Section 2.1]{dowdall2017mcmullen}. 
\end{remark}

Let $G$ be a finite connected graph and $\psi : G \to G$ an expanding irreducible train track map. According to \cite{dowdall2015dynamics, dowdall2017mcmullen}, there is a proper subcone of a component of the symmetrized BNS-invariant $\Sigma_s$, called \emph{positive cone} and denoted by $\mathcal{A}$, containing the cohomology class corresponding to $\psi$. In the positive cone, each primitive integral class $\alpha$ corresponds to a fibration of the (folded) mapping torus of $\psi : G \to G$ over the circle whose monodromy map $\psi_{\alpha} :~G _{\alpha} \to G_{\alpha}$ is an expanding irreducible train track map.
 
By picking the fold in Section~\ref{sec:3mnfd} to correspond to the folds in the folded mapping torus of $\psi$, let $\varphi : M_G \to M_G$ be a diffeomorphism of a doubled handlebody $M_G$ constructed in Section \ref{sec:3mnfd}. Let $N$ be the mapping torus of $\varphi$ and $\mathcal{C}$ be the generalized fibered cone of $\varphi$.

\begin{thm} \label{thm:positivecone}
	
	Let $\psi: G \to G$ be an expanding irreducible train track map 
	and $\varphi : M_G \to M_G$ be an induced map on the doubled handlebody $M_G$ in Section \ref{sec:3mnfd}. Let $\mathcal{R}$ be an intersection of a $d+1$-dimensional rational subspace
	and a proper subcone of the generalized fibered cone of $\varphi$. Then 
	we have $$l(\varphi_{\alpha}) \lesssim g_{\alpha}^{-1-1/d}$$ for all primitive integral element $\alpha \in \mathcal{R}\cap \mathcal{A}$,  where $\varphi_{\alpha}$ is the monodromy of the fibration $N \to S^1$ corresponding to $\alpha$, $l(\varphi_{\alpha})$ is the asymptotic translation length on the sphere complex of the fiber $M_{\alpha}$, and  $g_{\alpha} = \operatorname{rank} \pi_1(G_\alpha)$.
\end{thm}

\begin{proof}

	From Corollary \ref{thm:graph}, we have already seen that \begin{equation} \label{eqn:mainestimate}
	l(\varphi_{\alpha}) \lesssim \lVert \alpha \rVert ^{-1-1/d}
	\end{equation} for a norm $\lVert \cdot \rVert$ on $H^1(N;\R)$, where $N$ is the mapping torus of $\varphi : M_G \to M_G$. Now it remains to see how $\lVert \alpha \rVert$ is related to $g_{\alpha}$. Since all norms on $H^1(N;\R)$, a finite-dimensional $\R$-vector space, are equivalent, we are free to choose the norm $\lVert \cdot \rVert$.
	
	In this line of thought, we introduce the Alexander norm on $H^1(N;\R)$ in a similar spirit of \cite{dowdall2015dynamics} to rewrite (\ref{eqn:mainestimate}) in terms of the genus of each fiber. Similar to the Thurston norm, the Alexander norm ball is the dual of Newton polytope of the Alexander polynomial $\Delta$. For details, see \cite{mcmullen2002alexander}.
	
	Denote $\lVert \alpha \rVert_A$ the Alexander norm of $\alpha$. As in \cite{dowdall2015dynamics}, it follows from \cite[Theorem 4.1]{mcmullen2002alexander} together with \cite[Theorem 3.1]{button2007mapping} that $$\lVert \alpha \rVert_A = g_{\alpha} - 1$$ when $\alpha$ belongs to the cone on the open faces of the Alexander norm ball. This equality is obtained in the following way (cf. \cite[Theorem 4.1]{mcmullen2002alexander}): Let $\alpha(\Delta)$ be the Laurant polynomial induced by $\alpha$ and $\Delta$. Writing $\Delta$ as the sum of distinct terms, there is only one summand which yields the highest degree term in $\alpha(\Delta)$ and similarly for the lowest degree term, since $\alpha$ is inside the cone. It means that $\deg \alpha(\Delta)$ is exactly the difference of the highest and the lowest degrees of induced terms from the summand of $\Delta$. On the other hand, the difference equals to $\lVert \alpha \rVert_A$, and thus $\deg \alpha(\Delta) = \lVert \alpha \rVert_A$. Combining with the fact that $g_{\alpha} = 1 + \deg \alpha(\Delta)$, we conclude the above equality.
	
	Even if $\alpha$ is not contained in the cone on an open face, $\lVert \alpha \rVert_A$ has something to do with $g_{\alpha}$. As one can see in the previous argument, the assumption of belonging to the cone is only for showing $\deg \alpha(\Delta) = \lVert \alpha \rVert_A$. Instead, if the assumption does not hold, then there can be two distinct summands of $\Delta$ deducing the highest (or the lowest) degree terms in $\alpha(\Delta)$ and thus cancellation may occur. As such, we obtain $\deg \alpha(\Delta) \le \lVert \alpha \rVert_A$ rather than the  equality. Then again from $g_{\alpha} = 1 + \deg \alpha(\Delta)$, we now conclude $$\lVert \alpha \rVert_A \ge g_{\alpha} - 1.$$
	
	Going back to the estimation (\ref{eqn:mainestimate}), we can now relate $\lVert \alpha \rVert$ (or $\lVert \alpha \rVert_{A}$) and $g_{\alpha}$ by $\lVert \alpha \rVert_A \ge g_{\alpha} - 1$, regardless of the position of $\alpha$ relative to the cones on the open faces of the Alexander norm ball. Consequently, we conclude that $$l(\varphi_{\alpha}) \lesssim g_{\alpha}^{-1-1/d}$$ as desired.
\end{proof}

Note here that the proper subcone $\mathcal{A}$ 
depends on the choice of a folding sequence of $\psi$. However, our argument does not depend on which folding sequence we choose. Indeed, the estimate in Theorem \ref{thm:positivecone} holds for any choice of a folding sequence.

\subsection{Free-splitting and free-factor complexes}
The \emph{free-splitting complex $\mathcal{FS}_K$} of a group $K$ is a simplicial complex consisting of free splittings of $K$. More precisely, its vertices are equivalence classes of free splittings of $K$ whose corresponding graph of groups have a single edge, and two vertices are connected by an edge of length 1 if they are represented by free splittings with a common refinement. For instance, two free splittings $A *(B * C)$ and $(A * B) * C$ are connected by an edge. For higher dimensional simplices and the equivalence relation among free splittings, see \cite{kapovich2014hyperbolicity}.

We continue the discussion from the previous subsection.
As  in \cite{aramayona2011automorphisms}, the sphere complex of the fiber $M_\alpha$ is equivalent to the free splitting complex of its fundamental group. Accordingly we can restate Corollary \ref{thm:graph} and Theorem \ref{thm:positivecone} in terms of free-splitting complexes as follows. We simply write $\mathcal{FS}_{g}$ the free-splitting complex of the free group $F_g$ of rank $g$:

\begin{cor} \label{cor:FSlength}
	
	Let $\psi: G \to G$ be an expanding irreducible train track map 
	and $\varphi : M_G \to M_G$ be the induced map on the doubled handlebody $M_G$ in Proposition \ref{lem:dhand}. 
	Let $N$ be the mapping torus of $\varphi$. Let $\mathcal{R}$ be an intersection of a $d+1$-dimensional rational subspace and a proper subcone of the generalized fibered cone of $\varphi$. Then
	\[l_{\mathcal{FS}_{\pi_1(M_{\alpha})}}(\varphi_{\alpha}) \lesssim \lVert\alpha\rVert^{-1-1/d}\]
	for all primitive integral element $\alpha \in \mathcal{R}$, where $\varphi_{\alpha}$ is the monodromy of the fibration $N \to S^1$ corresponding to $\alpha$ with the fiber $M_{\alpha}$ and $\lVert\cdot\rVert$ be any norm on $H^1(N)$.

	Moreover,\[
	l_{\mathcal{FS}_{g_{\alpha}}}(\varphi_{\alpha})\lesssim g_{\alpha}^{-1-1/d}
	\] for all primitive integral element $\alpha \in \mathcal{R} \cap \mathcal{A}$ where $g_{\alpha} = \operatorname{rank} \pi_1(G_\alpha)$.
	
\end{cor}

Similar to the free-splitting complex, the \emph{free-factor complex $\mathcal{FF}_g$} of $F_g$ is a simplicial complex whose vertices are conjugacy classes of proper free factors of $F_g$, and $k+1$ vertices form a $k$-simplex if they can be represented by proper free factors $A_0 \le A_1 \le \cdots \le A_{k}$ of $F_g$. Again, we set all edges of the free-factor complex to be of length 1. 

As in \cite{kapovich2014hyperbolicity}, there is a coarsely $\Out(F_g)$-equivariant (coarseness independent of $g$) Lipschitz map $\Psi$ from the vertices of the free-splitting complex to the vertices of the free-factor complex. Hence, the following  analogous result for the free-factor complex is deduced from Corollary \ref{cor:FSlength}:

\begin{cor} \label{cor:FFlength}
	Let $\psi: G \to G$ be an expanding irreducible train track map 
	and $\varphi : M_G \to M_G$ be the induced map on the doubled handlebody $M_G$ in Proposition \ref{lem:dhand}. 
	Let $N$ be the mapping torus of $\varphi$. Let $\mathcal{R}$ be an intersection of a $d+1$-dimensional rational subspace and a proper subcone of the generalized fibered cone of $\varphi$. Then \[
	l_{\mathcal{FF}_{g_{\alpha}}}(\varphi_{\alpha}) \lesssim g_{\alpha}^{-1-1/d}
	\] for all primitive integral element $\alpha \in \mathcal{R} \cap \mathcal{A}$ where $\varphi_{\alpha}$ is the monodromy of the fibration $N \to S^1$ corresponding to $\alpha$ with the fiber $M_{\alpha}$ and $g_{\alpha} = \operatorname{rank} \pi_1(G_\alpha)$.
\end{cor}

For more relations among complexes defined on a free group, one can refer to \cite{guirardel2019boundaries} and \cite{kapovich2009geometric}.

\begin{remark} \label{rmk.out}
	It was studied by Hironaka \cite{hironaka2011fibered} and Hensel--Kielak \cite{HenselKielak_handlebody} that when monodromies in a given Thurston's fibered cone can be extended to associated handlebodies. Indeed, in \cite{HenselKielak_handlebody}, they proved that for any free group automorphism $f : F_g \to F_g$, there are infinitely many pseudo-Anosov $\varphi \in \H_g$ such that every monodromy in Thurston's fibered cone containing $\varphi$ extends to the associated handlebody. Also, Laudenbach \cite{laudenbach1974topologie} showed that for a doubled  handlebody $\mathcal{M}_g$ of genus $g$, the natural surjection $\Mod(\mathcal{M}_g) \to \Out(F_g)$ has the kernel acting trivially on the sphere complex of $\mathcal{M}_g$. Together with these results, Corollary \ref{cor:FSlength} and Corollary \ref{cor:FFlength} can be interpreted in terms of asymptotic translation lengths of $\Out(F_g)$-elements.
\end{remark}

\medskip
\bibliographystyle{alpha} 
\bibliography{spherical}

\end{document}